\newcommand{\mg}{\mathfrak g }
\newcommand{\mn}{\mathfrak n }
\newcommand{\mz}{\mathfrak z }
\newcommand{\mv}{\mathfrak v }
\newcommand{\mh}{\mathfrak h }
\newcommand{\ma}{\mathfrak a }
\newcommand{\so}{\mathfrak{so} }
\newcommand{\bil}{g}
\newcommand{\lela}{ g(}
\newcommand{\rira}{)}
\newcommand{\lra}{\longrightarrow}
\newcommand{\bs}{\backslash}
\renewcommand{\Im}{\rm Im\,}
\newcommand{\R}{\mathbb R}
\newcommand{\Q}{\mathbb Q}
\newcommand{\N}{\mathbb N}
\DeclareMathOperator{\End}{End}
\DeclareMathOperator{\ad}{ad}
\numberwithin{equation}{section}
 \newtheorem{teo}{Theorem}[section]
 \newtheorem{pro}[teo]{Proposition}
 \newtheorem{cor}[teo]{Corollary}
 \newtheorem{lm}[teo]{Lemma}
 \newtheorem{defi}[teo]{Definition}
 \theoremstyle{definition}
 \newtheorem{ex}[teo]{Example}
 \newtheorem{remark}[teo]{Remark}
\newcommand{\nc}{\newcommand}
\nc{\Iso}{\operatorname{Iso}}
 \nc{\iso}{\mathfrak{iso}}
 \nc{\sso}{\mathfrak{so}}
\nc{\Ad}{\operatorname{Ad}} 
\nc{\Sym}{\mathrm{Sym}}
 \nc{\pr}{\operatorname{pr}} 
 \nc{\Dera}{\operatorname{Dera}} \nc{\Auto}{\operatorname{Auto}}
 \nc{\name}{decomposable }
 \nc{\nameend}{decomposable}
\begin{document}

\title{Symmetric Killing tensors on nilmanifolds}
\author{Viviana del Barco}
\address{Universidade Estadual de Campinas, Campinas, Brazil and Universidad Nacional de Rosrio, CONICET, Rosario, Argentina}
\thanks{V. del Barco supported by FAPESP
grants 2015/23896-5 and 2017/13725-4. }
\email{delbarc@ime.unicamp.br}

\author{Andrei Moroianu}
\address{Laboratoire de Math\'ematiques d'Orsay, Univ. Paris-Sud, CNRS, Universit\'e Paris-Saclay, 91405 Orsay, France }
\email{andrei.moroianu@math.cnrs.fr}

\begin{abstract} We study left-invariant symmetric Killing 2-tensors on $2$-step nilpotent Lie groups endowed with a left-invariant Riemannian metric, and construct genuine examples, which are not linear combinations of parallel tensors and symmetric products of Killing vector fields. 
\end{abstract}

\subjclass[2010]{53D25, 22E25} 
\keywords{Symmetric Killing tensors, geodesic flow, $2$-step nilpotent Lie groups} 
\maketitle

\section{Introduction}

A symmetric Killing tensor on a Riemannian manifold $(M,g)$ is a smooth function defined on the tangent bundle of $M$ which is polynomial in the vectorial coordinates and constant along the geodesic flow.
As such, symmetric Killing tensors have been extensively studied by Physicists in the context of integrable systems since they define first integrals of the equation of motion. More recently, they started to be subject of systematic research in differential geometry. An account of their properties and constructions in a modern language can be found in \cite{He18,HMS16,Se03}. In this context, symmetric Killing tensors are characterized as symmetric tensor fields having vanishing symmetrized covariant derivative.

Trivial examples of symmetric Killing tensors on Riemannian manifolds are the parallel ones, as well as the Killing vector fields. Symmetric tensor products of these particular tensors generate the subalgebra of {\em decomposable} symmetric Killing tensors. Thus the following question arises: when does the this subalgebra coincide with the whole algebra of symmetric Killing tensors? This is an open question in general (see for instance, \cite[Question 3.9]{BMMT}), but it has been already considered in some particular contexts. 
For instance, in the case of Riemannian manifolds with constant curvature, it is known that every symmetric Killing tensor is decomposable \cite{Ta83,Th86}. On the contrary, there are Riemannian metrics on the 2-torus possessing {\em indecomposable} symmetric Killing 2-tensors \cite{HMS17}, \cite{MS}. One should notice that this case is extreme because these metrics lack of Killing vector fields.

In this paper we consider this question in the context of 2-step nilpotent Lie groups endowed with a left-invariant Riemannian metric. Our challenge is to establish conditions for these Riemannian Lie groups to carry indecomposable left-invariant symmetric Killing 2-tensors, that is, which are not linear combinations of parallel tensors and symmetric products of Killing vector fields.

First integrals of the geodesic flow given by a left-invariant metric on 2-step nilpotent Lie groups have been studied by several authors \cite{Bu03, KOR,MSS, Ov18}, with particular focus on the integrability of the dynamical system.  These works illustrate the difficulties and the importance of constructing first integrals of the equation of motion. Notice that in our terminology, indecomposable symmetric Killing tensor correspond to first integrals which are independent from the trivial ones (defined by the Killing vector fields and the parallel tensors).

Our starting point is the characterization of left-invariant symmetric 2-tensors on Riemannian 2-step nilpotent Lie groups which satisfy the Killing condition (see Proposition \ref{pro.caracterizKilling}). Afterwards, we address the decomposability of these symmetric Killing 2-tensors.  

Along our presentation we will stress on the fact that decomposable left-invariant symmetric Killing tensors can be obtained as a linear combination of symmetric products of Killing vector fields which {\em are not} left-invariant. 
This means that the problem can not be reduced to solve linear equations in the Lie algebra. 
We overcome this difficulty by combining  the aforementioned characterization of symmetric Killing tensors together with the description of the Lie algebra of Killing vector fields of a nilpotent Lie group given by Wolf \cite{Wo63}. We obtain in Proposition \ref{pro.Senv} and Theorem \ref{pro.necT} that the decomposability of a symmetric Killing 2-tensor depends on the possibility to extend certain linear maps to skew-symmetric derivations of $\mn$. 

The algebraic structure of 2-step nilpotent Lie algebras of dimension $\leq 7$ allows the existence of a large amount of skew-symmetric derivations. In particular, it turns out that the linear maps involved in Theorem \ref{pro.necT} always extend to skew-symmetric derivations. As a consequence, we obtain that every left-invariant symmetric Killing 2-tensor in Riemannian 2-step nilpotent Lie groups of dimension $\leq 7$ is decomposable. On the contrary, we show that there are symmetric Killing tensors on 2-step nilpotent Lie algebras of dimension 8 which are indecomposable. We actually provide a general construction method of indecomposable symmetric Killing tensors on 2-step nilpotent Lie algebras in any dimension $\geq 8$.

Taking compact quotients of nilpotent Lie group admitting indecomposable symmetric Killing 2-tensors, we obtain new examples of compact Riemannian manifolds with indecomposable symmetric Killing tensors but also admitting non-trivial Killing vector fields. Other examples were constructed in \cite{MS,MF}.

A brief account on the organization of the paper is the  following. 
In Section 2 we introduce notations and give the preliminaries on nilpotent Lie groups endowed with a left-invariant Riemannian metric. We include the description of the Lie algebra of Killing vector fields as consequence of the result of Wolf on the isometry groups of nilpotent Lie groups endowed with a left-invariant Riemannian metric. Section 3 reviews the basic results on symmetric Killing tensors. In Section 4 we characterize the symmetric endomorphisms on $\mn$ giving rise to left-invariant Killing 2-tensors on the Lie group $N$. We then show that the space of symmetric Killing 2-tensors decomposes according to the natural decomposition of a 2-step nilpotent Lie algebra. 
Section 5 contains our main results on the decomposability of left-invariant symmetric Killing 2-tensors. 
We exhibit families of Lie algebras all of whose left-invariant symmetric Killing 2-tensors are decomposable, i.e. linear combinations of parallel symmetric tensors and symmetric products of Killing vector fields. On the other hand, we show that this is not always the case by constructing examples of 2-step nilpotent Lie algebras admitting indecomposable left-invariant symmetric Killing 2-tensors. 

We finally obtain in the appendix several results about 2-step nilpotent Lie groups. 
These include relevant results on parallel symmetric tensors which are directly used in Section 5 of the paper, but we also present some more general results which, we believe, are of independent interest.

\noindent {\bf Acknowledgment.} The research presented in this paper was done during the visit of the first named author at the Laboratoire de Math\'ematiques d'Orsay, Univ. Paris-Sud. She expresses her gratitude to the LMO for its hospitality and to Funda\c{c}\~ao de Amparo \`a Pesquisa do Estado de S\~ao Paulo (Brazil) for the financial support.

Both authors are grateful to the anonymous referee for a careful reading of the manuscript and useful suggestions which helped to improve the presentation.

\section{Preliminaries on nilpotent Lie groups}\label{sec.preliminaries}

This section intends to fix notations and to summarize the most relevant features of the geometry of nilpotent Lie groups endowed with a left-invariant metric.

Let $N$ be a Lie group and denote the left and right translations by elements of the group, respectively, by
$$ L_h:N\lra N, \; L_h(p)=hp,\qquad R_h:N\lra N, \;R_h(p)=ph,\quad h,p\in N.$$ Denote by $I_h$ the conjugation by $h\in N$ on $N$, that is, $I_h=L_h\circ R_{h^{-1}}$. As usual $\Ad(h)$ denotes the differential of $I_h$ at $e$.

For each $h\in N$, $L_h$ is a diffeomorphism of $N$ so its differential at $p\in N$, $(L_h)_{*p}:T_pN\lra T_{hp}N$ is an isomorphism. This map extends to an isomorphism from the space of $(k,l)$ tensors $T_pN^{\otimes^k}\otimes T_p^*N^{ \otimes^l}$ to $T_{hp}N^{\otimes^k}\otimes T_{hp}^*N^{ \otimes^l}$, which we also denote $(L_{h})_{*p}$; we will sometimes omit the point $p$ and simply write $(L_{h})_*$. Given a differential form $\omega\in \Gamma(\Lambda^*N)$, the pullback $L_h^*\omega$ is the differential form given by $(L_h)_*\omega=(L_h^*)^{-1}\omega$.

From now on we assume that $N$ is a connected and simply connected nilpotent Lie group and we denote $\mn$ its Lie algebra. Then the Lie group exponential mapping $\exp:\mn\lra N$ is a diffeomorphism \cite{VAR}. We shall use this fact to write tensors fields on $N$ as functions defined on $\mn$ taking values on its tensor algebra of $\mn$. 

Let $S$ be a tensor field on $N$, that is, a section of the bundle $\bigoplus_{k,l}TN^{\otimes^k}\otimes T^*N^{ \otimes^l}$. For each $p\in N$, $S_p\in \bigoplus_{k,l}T_pN^{\otimes^k}\otimes T_p^*N^{ \otimes^l}$ so that $S$ defines a function $\Omega_S:\mn\lra \bigoplus_{k,l}\mn^{\otimes^k}\otimes \mn^{ *\otimes^l}$ as follows:
\begin{equation}\label{eq.defOmegaS}
\Omega_S(w):=(L_{\exp (-w)})_{*} S_{\exp w},\quad w\in \mn.
\end{equation} The map $\Omega_S$ gives the value of $S$ at $\exp w$, translated to $\mn$ by the appropriate left-translation. This is a well defined map since $\exp:\mn\lra N$ is a diffeomorphism. Notice that $S_p=(L_p)_{*e}\Omega_S(\exp ^{-1}p)$, $p\in N$.

With this notation, we can introduce the usual concept of left-invariant tensors as follows:
\begin{defi}
A left-invariant tensor field $S$ on $N$ is a tensor field such that  $\Omega_S$ is  a constant function.
\end{defi}

A vector field $X$ on $N$ with $X_e=x$ is left-invariant if and only if $\Omega_X(w)=x$ for all $w\in \mn$, that is, 
$ (L_{\exp-w})_{*}X_{\exp w}=x$. Replacing $p=\exp w$, this condition reads as the usual condition $X_p=(L_p)_{*e}x$ for all $p\in N$.

A Riemannian metric $\bil$ on $N$ is a section of the bundle $T^*N^{\otimes^2}$. Evaluated at the identity $e\in N$, $\bil_e$ defines an inner product on $\mn$ and $\Omega_{\bil}(0)=\bil_e$. The Riemannian metric $\bil$ is left-invariant if and only if $\Omega_{\bil}(w)=\bil_e$ for all $w\in \mn$. From \eqref{eq.defOmegaS}, $\bil$ is left-invariant if and only if $(L_p)_{*}$ is an isometry for all $p\in N$, or $(L_p)^*\bil=\bil$.

\begin{remark}\label{rem:leftS}
There is  a one to one correspondence between left-invariant tensor fields on $N$, and elements in the tensor algebra of $\mn$. We will sometimes identify the corresponding objects.
For instance, a left-invariant symmetric $2$-tensor on $N$ will be identified with an element in $\Sym^2\mn$ and vice-versa.
\end{remark}

Fix $\bil$ a left-invariant Riemannian metric on $N$. We denote also by $g$ the inner product it defines in $\mn$. Let $\nabla$ denote the Levi-Civita connection associated to $\bil$. The Koszul formula evaluated on left-invariant vector fields $X,Y,Z$ reads
\begin{equation}\label{eq.Koszul}
\lela \nabla_X Y,Z\rira=\frac12\{ \lela [X,Y],Z\rira+\lela [Z,X],Y\rira+\lela [Z,Y],X\rira\}.
\end{equation}
One has then, since $(L_p)_*$ is an isometry for all $p\in N$, that $\nabla_XY$ is left-invariant whenever $X,Y$ are so. Moreover, for any left-invariant vector field $X$ and any left-invariant tensor field $S$ on $N$, the covariant derivative $\nabla_X S$ is also left-invariant. Thus $\nabla$ defines a linear map $\nabla:\mn\lra  \End(\mathcal T(\mn))$ by $X\mapsto \nabla_X$, where $\mathcal T(\mn)=\bigoplus_{k,l}\mn^{\otimes^k}\otimes \mn^{*\otimes^l}$ is the tensor algebra of $\mn$. \medskip

To continue, we describe the Killing vector fields of the Riemannian manifold $(N,\bil)$. Recall that a vector field $\xi$ on $N$ is a Killing vector field if and only if its flow is given by isometries. In terms of the covariant derivative, $\xi$ is a Killing vector field if and only if $ \lela \nabla_X \xi,X\rira=0$ for any vector field $X$.

The Riemannian manifold $(N,\bil)$ is complete because it is homogeneous. Therefore, every Killing vector field is complete and the vector space of Killing vector field is finite dimensional. Endowed with the Lie bracket of vector fields, they constitute a Lie algebra which is isomorphic to the Lie algebra $\iso(N)$ of the isometry group $\Iso(N)$ of $N$. We recall the results of Wolf on the structure of this isometry group \cite{Wo63}.

The nilpotent Lie group $N$ itself embeds as a subgroup of $\Iso(N)$ through the inclusion $g\in N\mapsto L_g\in\Iso(N)$, since the metric is left-invariant. Denote by $\Auto(N)$ the group of isometries of $N$ which are also group automorphisms; these isometries fix the identity. Wolf shows that these two subgroups determine the isometry group of $N$ and gives its precise algebraic structure.

\begin{teo}  \cite{Wo63} Let $N$ be a connected nilpotent Lie group with a left-invariant metric $\bil$. Then the isometry group $\Iso(N)$ is the semidirect product of the group of left-translations and the group of isometric automorphism, that is, 
$$\Iso(N)= {\rm Auto}(N)\ltimes N,$$
where the action of $\Auto(N)$ on $N$ is $ f\cdot L_h=L_{f(h)}$ for all $h\in N$, $f\in \Auto(N)$. 
\end{teo}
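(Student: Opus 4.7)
The plan is to prove the two containments $\Auto(N) \ltimes N \subseteq \Iso(N)$ and $\Iso(N) \subseteq \Auto(N) \ltimes N$ separately, with essentially all the work lying in the second one.

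For the first containment, I would verify that the natural map $(f,h)\mapsto L_h\circ f$ realizes $\Auto(N)\ltimes N$ as a subgroup of $\Iso(N)$. Each $L_h$ is an isometry because $g$ is left-invariant, and each $f\in \Auto(N)$ is an isometry by definition. The semidirect product structure comes from the identity $f\circ L_h = L_{f(h)}\circ f$, which follows immediately from $f$ being a group homomorphism; this shows that $\Auto(N)$ normalizes the subgroup of left-translations with the prescribed action $f\cdot L_h = L_{f(h)}$. Injectivity is immediate by evaluating at $e$: if $L_h\circ f=\mathrm{id}$ with $f(e)=e$, then $h=e$ and hence $f=\mathrm{id}$.

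For the second containment, given $\phi\in \Iso(N)$, set $h:=\phi(e)$ and $f:=L_{h^{-1}}\circ \phi$, which is an isometry fixing $e$. It then suffices to show that any isometry fixing $e$ belongs to $\Auto(N)$. Writing $A:=df_e\in O(\mn,g)$, I would first establish the formula $f=\exp\circ A\circ \exp^{-1}$, using that $\exp:\mn\to N$ is a diffeomorphism (here the simply connectedness assumption is used). The decisive remaining step is to verify that $A$ is a Lie algebra automorphism, for then the formula above shows $f$ is the Lie group automorphism determined by $A$, and in particular lies in $\Auto(N)$.

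The main obstacle is precisely the claim that the linear isotropy $A=df_e$ of an isometry fixing $e$ preserves the Lie bracket on $\mn$; nilpotency is essential here, since the analogous statement fails for instance for bi-invariant metrics on compact semisimple Lie groups. To overcome it, one exploits the orthogonal decomposition $\mn=\mv\oplus \mz$, where $\mz$ is the center and $\mv=\mz^\perp$, together with analogous decompositions arising from the descending central series. Combining this with the Koszul formula \eqref{eq.Koszul}, one shows that the Ricci tensor and the $J$-operators $J_Z:\mv\to \mv$ defined by $g(J_Z X,Y)=g([X,Y],Z)$ for $Z\in\mz$ are isometry invariants at $e$ that together recover the Lie bracket; any orthogonal map $A=df_e$ must commute with these invariants, and is therefore forced to be a Lie algebra automorphism. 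Once this is established, the semidirect product description follows.
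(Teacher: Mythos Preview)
The paper does not give its own proof of this theorem; it is quoted from Wolf \cite{Wo63} and used as a known input. So there is no in-paper argument to compare against, and I will simply assess your proposal.

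Your overall strategy is the standard one, but there is a genuine gap in the ordering of the steps for the second containment. You propose to ``first establish the formula $f=\exp\circ A\circ \exp^{-1}$'' and only afterwards verify that $A$ is a Lie algebra automorphism. This cannot work as written: with the Lie group exponential (which is what you mean, since you invoke simple connectedness to make it a diffeomorphism), the identity $f=\exp\circ A\circ\exp^{-1}$ is \emph{equivalent} to $f$ being the group automorphism with differential $A$; it is not a consequence of $f$ being an isometry. One-parameter subgroups $t\mapsto\exp(tX)$ are in general \emph{not} geodesics of a left-invariant metric on a nilpotent group, so there is no a priori reason an isometry fixing $e$ should respect them. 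What you do get for free is $f=\exp^g_e\circ A\circ(\exp^g_e)^{-1}$ with the \emph{Riemannian} exponential, which is a different map.

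The correct order is the reverse: first prove that $A=df_e$ is a Lie algebra automorphism; then the associated group automorphism $F:=\exp\circ A\circ\exp^{-1}$ exists (by simple connectedness) and is an isometry because $dF_e=A$ is orthogonal and $F$ intertwines left translations; finally $f$ and $F$ are isometries with the same $1$-jet at $e$, hence $f=F$. One further remark on the hard step: calling the $J$-operators ``isometry invariants at $e$'' is circular, since they encode the bracket you are trying to show is preserved. What is invariant is the curvature (and Ricci) tensor; in the $2$-step case one extracts from these enough of the structure of the $J_Z$ to force $A$ to preserve the bracket, and in the general nilpotent case one argues inductively along the descending central series, as in Wolf's original proof.
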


The differential of an isometric automorphism $f\in \Auto(N)$ defines an orthogonal automorphism of the Lie algebra $\mn$, $f_{*e}\in\Auto(\mn)$. Since $N$ is simply connected, this correspondence is an isomorphism and we have $\Auto(N)\simeq\Auto(\mn)$. Let $[\ ,\ ]$ denote the Lie bracket on $\mn$, then the Lie algebra of $\Auto(\mn)$ is the subspace of skew-symmetric derivations of $\mn$, that is, 
$$ \Dera(\mn)=\{D\in \so(\mn) \ |\  D[x,y]=[Dx,y]+[x,Dy]\,\;\mbox{for all }x,y\in\mn\}.$$

The result of Wolf implies that the Lie algebra of the isometry group is $\iso(N)=\Dera(\mn)\ltimes\mn$, where the semi-direct product is defined by the canonical action of $\Dera(\mn)$ on $\mn$. Therefore, we can distinguish two different types of Killing vector fields on $N$, namely, those corresponding to elements in the Lie algebra and those corresponding to skew-symmetric derivations.

The Killing vector field $\xi_x$ associated to an element $x\in \mn$ is the right-invariant vector field on $N$ whose value at $e$ is $x$: $(\xi_x)_p=(R_p)_{*e}x$ for  all $p\in N$. In fact, the flow of this vector field is given by left-translations, which are isometries.  Given $p\in N$, $(\xi_x)_p=(R_p)_{*e}x=\frac{d}{dt}|_0\exp tx\cdot p$.  But $R_p=L_p\circ I_{p^{-1}}$ hence $(\xi_x)_p=(L_p)_{*e}(\Ad(p^{-1})x)$.
If $w\in \mn$ is such that $p=\exp w$, then $\Ad(p^{-1})=\Ad(\exp(-w))$ and thus
\begin{equation}\label{eq.xix}(\xi_x)_{\exp w}=(L_{\exp w})_{*e}(\Ad(\exp(-w))).\end{equation}

The Killing vector field $\xi_D$ associated to a skew-symmetric derivation $D$ of $\mn$ is constructed as follows. The derivation gives a curve $e^{tD}$ in $\Auto(\mn)$ which at the same time induces a curve of isometries $f_t\in \Auto(N)$ by the condition $(f_t)_{*e}=e^{tD}$, so it satisfies $(\xi_D)_p=\frac{d}{dt}|_0 f_t(p)$, $p\in N$. For any automorphism $f:N\lra N$, one has $f(p)=f(\exp w)=\exp f_{*e} w$. Hence $(\xi_D)_p=\frac{d}{dt}|_0 \exp ((f_t)_{*e}w)=\frac{d}{dt}|_0 \exp (e^{tD}w)=d\exp_w(Dw)$. Taylor's formula for the differential of the exponential map (see \cite[Theorem 1.7, Ch. II]{HE}) gives 
\begin{equation}\label{eq.xiD}(\xi_D)_{\exp w}=d(L_{\exp w})_e(Dw-\frac12\ad_w Dw+\frac16\ad_w^2Dw-\ldots). \end{equation}
This sum is finite since $\mn$ is nilpotent.

Using the notation \eqref{eq.defOmegaS} of tensor fields on $N$ as functions defined on $\mn$, and the fact that $\Ad(\exp(-w))=e^{-\ad_w}$, for every $x\in \mn$ and $D\in \Dera(\mn)$ we get from \eqref{eq.xix}--\eqref{eq.xiD}:
\begin{eqnarray}
\Omega_{\xi_x}(w)&=&\Ad(\exp(-w))=x-[w,x]+\frac12[w,[w,x]]+\ldots,\quad  \forall w\in \mn.\label{eq.omegaxix}\\
\Omega_{\xi_D}(w)&=&d\exp_w(Dw)=Dw-\frac12[w,Dw]+\frac16\ad_w^2Dw-\ldots ,\quad \forall  w\in \mn.\label{eq.omegaxiD}
\end{eqnarray}

Notice that in general these Killing vector fields are not left-invariant. On the one hand $\xi_x$ is left-invariant if and only if $\Ad(\exp(-w))=x$ for all $w\in \mn$, tht is, $I_p(x)=x$ for  all $p\in N$, which occurs only when $x$ is in the center of $N$. On the other hand, $\Omega_{\xi_D}(0)=0$, so $\xi_D$ is left-invariant if and only if $D=0$.

\medskip

At this point we will focus on the geometry of $2$-step nilpotent Lie groups endowed with a left-invariant metric. The Lie algebra $\mn$ is said to be $2$-step nilpotent if it is non abelian and $\ad_x^2=0$ for all $x\in\mn$.
 
The center and the commutator of a Lie algebra $\mn$ are defined as
\[\mz=\{z\in\mn\ |\   [x,z]=0, \,\mbox{for all }x\in\mn\},\qquad
\mn'=\{ [x,y]\ |\   x,y\in\mn\}.
\]
 In the particular case of $2$-step nilpotent Lie groups, $\mn'\subset\mz$, thus \eqref{eq.omegaxix} and \eqref{eq.omegaxiD} simplify to:
\begin{equation}\label{eq.xi}
\Omega_{\xi_x}(w)=x-[w,x],\qquad\Omega_{\xi_D}(w)=Dw-\frac12[w,Dw]\quad  w\in \mn.
\end{equation}

From now on we assume that $\mn$ is $2$-step nilpotent and denote as before by $N$ the simply connected group with Lie algebra $\mn$, endowed with the left-invariant metric $g$ defined by the scalar product on $\mn$. We shall describe the main geometric properties of $(N,g)$ through linear objects in the metric Lie algebra $(\mn,g)$, following the work of Eberlein \cite{EB}.

Let $\mv$ be the orthogonal complement of $\mz$ in $\mn$ so that $\mn=\mv\oplus\mz$ as an orthogonal direct sum of vector spaces.  Each central element $z\in\mz$ defines an endomorphism $j(z):\mv\lra\mv$ by the equation
\begin{equation}\label{eq:jota}
\lela j(z)x,y\rira :=\lela z,[x,y]\rira \quad \mbox{ for all } x,y\in\mv.
\end{equation}
This endomorphism $j(z)$ belongs to $\sso(\mv)$, the Lie algebra of skew-symmetric maps of $\mv$ with respect to $\bil$. The linear  map $j: \mz \to \sso(\mv)$ captures important geometric information of the Riemannian manifold $(N,\bil)$. 
Using Koszul's formula \eqref{eq.Koszul}, the Levi-Civita covariant derivative of $g$ can be given in terms of the $j(z)$ maps by
\begin{equation}\label{eq:nabla}\left\{
\begin{array}{ll}
\nabla_x y=\frac12 \,[x,y] & \mbox{ if } x,y\in\mv,\\
\nabla_x z=\nabla_zx=-\frac12 j(z)x & \mbox{ if } x\in\mv,\,z\in\mz,\\
\nabla_z z'=0& \mbox{ if } z, z'\in\mz.
\end{array}\right.
\end{equation}

If $D:\mn\lra\mn$ is a derivation of $\mn$, then by definition $[Dz,x]=D[z,x]-[z,Dx]$ for all $z\in \mz$, $x\in \mn$, so $D$ preserves the center $\mz$. Thus a skew-symmetric derivation of $\mn$ preserves both $\mz$ and $\mv=\mz^\bot$ and for every $x,y\in\mv$ and $z\in\mz$ one has:
\begin{eqnarray*}g(j(Dz)x,y)&=&g([x,y],Dz)=-g(D[x,y],z)=-g([Dx,y],z)-g([x,Dy],z)\\&=&-g(j(z)Dx,y)-g(j(z)x,Dy)=-g([j(z),D],y),\end{eqnarray*}
whence
\begin{equation}\label{eq.deriv}
j(Dz)=[D|_\mv,j(z)] \quad \mbox{ for all } z\in \mz.
\end{equation}
Conversely, it is straightforward to check that a skew-symmetric endomorphism on $\mn$ which preserves $\mz$ and $\mv$ and satisfies \eqref{eq.deriv} is a derivation.

\section{Symmetric Killing tensors} 

Let $(V,\bil)$ be an $n$-dimensional real vector space. 
Denote by $\mathrm{Sym}^kV$ the set of symmetric tensor products in $V^{\otimes^k}$. Elements in $\mathrm{Sym}^kV$ are symmetrized tensor products 
$$v_1\cdot \ldots \cdot v_k=\sum_{\sigma\in \mathfrak{S}_k} v_1\otimes \ldots \otimes  v_k, \quad v_1, \ldots, v_k\in V,$$ 
where $\mathfrak{S}_k$ denotes the group of bijections of the set $\{1,\ldots,k\}$. Symmetric 1-tensors are simply vectors in $V$. 

The inner product $\bil$ is induced to $\Sym^kV$ by the formula
$$\lela v_1\cdot \ldots \cdot v_k, u_1\cdot \ldots \cdot u_k\rira= \sum_{\sigma\in\mathfrak{S}_k}\lela v_1, u_{\sigma(1)}\rira\cdot \ldots\cdot\lela v_k, u_{\sigma(k)}\rira, \quad u_i,v_i\in V.$$
The space of symmetric $k$-tensors on $V$ is identified with the subspace of totally symmetric covariant tensors of degree $k$. In fact, we can identify a symmetric tensor $S\in{\rm Sym}^kV$ with the multilinear map $S:V\times \cdots\times V\lra \R$ satisfying 
\begin{equation}\label{eq.identifsimtensor}
S(v_1, \ldots, v_k)=\lela S,v_1\cdot\ldots\cdot v_k\rira, \quad v_1, \ldots, v_k\in V.
\end{equation} 
Under this identification, a vector $v\in V$ (symmetric $1$-tensor) is identified with its metric dual $\lela v,\cdot\rira$. The inner product $\bil$ becomes a symmetric $2$-tensor and $\bil=\frac12 \sum_{i=1}^n e_i^2$, if $\{e_1, \ldots, e_n\}$ is an orthonormal basis  of $V$. Here we denote $e_i^2=e_i\cdot e_i$. In general,  symmetric $k$-tensor are homogeneous polynomials of degree $k$ in the variables $e_1, \ldots, e_n$. 

Symmetric $2$-tensors $S\in {\rm Sym}^2V$ can also be seen as symmetric endomorphisms on $V$. In fact, $S$ is a symmetric bilinear form  $S:V\times V\lra \R$ and is related to the inner product by the condition $S(u_1,u_2)=\lela Su_1,u_2\rira$, $u_1,u_2\in V$. Given an orthonormal basis $\{e_1, \ldots, e_n\}$, if the matrix of $S$ in this basis, as an endomorphism, is $(s_{ij})_{ij}$, then $S$ as a polynomial reads 
\begin{equation}\label{eq.S}S=\frac12 \sum_{1\leq i, j\leq n} s_{ij}\ e_i\cdot e_j.\end{equation}

Let $(M,\bil)$ be a Riemannian manifold. 
A symmetric $k$-tensor field on $M$ is a section of the bundle $\Sym^k{TM}$, which is a sub-bundle of $TM^{\otimes^k}$. Symmetric 1-tensors are the vector fields on $M$. The symmetric product of vector fields on $M$ defines symmetric 2-tensors by the formula $ (X\cdot Y)_p:=X_p\otimes Y_p+Y_p\otimes X_p$. \smallskip

Under the identification in \eqref{eq.identifsimtensor}, symmetric tensors correspond to  covariant tensor fields, for instance a vector field $X$ on $M$ corresponds to the 1-form $\lela X, \cdot\rira$. As symmetric bilinear form, $X\cdot Y$ satisfies $(X\cdot Y)(u,v)=\lela X,u\rira\lela Y,v \rira+\lela X,v\rira\lela Y,u \rira$. For every local orthonormal frame $\{E_1, \ldots, E_n\}$, the Riemannian metric $\bil$ reads $\bil=\frac12 \sum_{i=1}^n E_i\cdot E_i$. Finally, notice that symmetric $2$-tensors on $M$ correspond to sections of $\End(TM)$ which are symmetric with respect to the Riemannian metric.

Let $\nabla$ denote the Levi-Civita connection defined by $\bil$ on $M$. 
\begin{defi}\label{def.symmkilltens}
A symmetric $k$-tensor $S$ is a Killing tensor if $(\nabla_XS)(X,\ldots,X)=0$ for any vector field $X$.
\end{defi}
Equivalently, a symmetric $k$-tensor is a Killing tensor if its symmetrized covariant derivative vanishes. This notion generalizes that of Killing vector field, as we shall see below. 

A concept related to symmetric Killing tensors is that of Killing forms \cite{Se03}. A $2$-form $\omega$ is said to be a Killing form if $X\lrcorner\  \nabla_X\omega=0$ for every vector field $X$ in $M$. Given a Killing 2-form, the symmetric 2-tensor defined as $S^\omega(X,Y)=\lela (X\lrcorner \ \omega)^\#,(Y\lrcorner \ \omega)^\#\rira$  is a Killing 2-tensor (see Proposition \ref{pro.symmkilltens12}), where $(X\lrcorner\ \omega)^\#$ denotes the metric dual of the 1-form $X\lrcorner \ \omega$. The section of $\End(TM)$ corresponding to the symmetric tensor $S^\omega$ is $-T^2$.

We next give a short account of basic results on Killing tensors \cite{HMS17}.
\begin{pro}\label{pro.symmkilltens12}
\begin{enumerate}
\item A vector field $\xi$ is a symmetric Killing $1$-tensor if and only if $\xi$ is a Killing vector field.
\item The symmetric product of two Killing vector fields is a symmetric Killing $2$-tensor.
\item The Riemannian metric $\bil$ is a symmetric Killing $2$-tensor.
\item If $\omega$ is a Killing $2$-form then $S^\omega$ is a symmetric Killing $2$-tensor.
\end{enumerate}
\end{pro}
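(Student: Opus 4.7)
My plan is to verify the four assertions of Proposition \ref{pro.symmkilltens12} individually, using Definition \ref{def.symmkilltens} and the identification \eqref{eq.identifsimtensor} of symmetric tensors with totally symmetric multilinear forms. Parts (1) and (3) are essentially tautologies. For (1), a vector field $\xi$ corresponds under \eqref{eq.identifsimtensor} to the $1$-form $g(\xi,\cdot)$, so the Killing condition of Definition \ref{def.symmkilltens} reads $(\nabla_X\xi)(X)=g(\nabla_X\xi,X)=0$, which is precisely the defining equation of a Killing vector field. For (3), metric compatibility of the Levi-Civita connection gives $\nabla g\equiv 0$, so the Killing condition is trivially satisfied.

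For (2), I would compute $(\nabla_X(\xi\cdot\eta))(X,X)$ directly. Using $(\xi\cdot\eta)(Y,Z)=g(\xi,Y)g(\eta,Z)+g(\xi,Z)g(\eta,Y)$ and expanding the covariant derivative via the Leibniz rule, the terms involving $\nabla_X X$ cancel, leaving
\[
(\nabla_X(\xi\cdot\eta))(X,X)=2g(\nabla_X\xi,X)\,g(\eta,X)+2g(\xi,X)\,g(\nabla_X\eta,X),
\]
and both summands vanish since $\xi$ and $\eta$ are Killing by part (1).

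For (4), the cleanest approach is to encode the $2$-form via the skew-symmetric endomorphism $T$ defined by $\omega(Y,Z)=g(TY,Z)$. A direct calculation shows $(\nabla_X\omega)(Y,Z)=g((\nabla_XT)Y,Z)$, so the Killing form condition $X\lrcorner\nabla_X\omega=0$ is equivalent to $(\nabla_XT)X=0$ for every $X$. Since $(X\lrcorner\omega)^\#=TX$, the tensor $S^\omega$ is given by $S^\omega(Y,Z)=g(TY,TZ)=-g(T^2Y,Z)$, i.e.\ it corresponds to the symmetric endomorphism $-T^2$. Expanding $\nabla_XT^2=(\nabla_XT)T+T(\nabla_XT)$ gives
\[
(\nabla_XS^\omega)(X,X)=-g((\nabla_XT)(TX),X)-g(T(\nabla_XT)X,X).
\]
The second term vanishes at once because $(\nabla_XT)X=0$. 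For the first, I would use that $\nabla_XT$ is skew-symmetric (since $T$ is and $\nabla$ preserves the metric) to rewrite it as $g(TX,(\nabla_XT)X)$, which also vanishes. The only real obstacle is bookkeeping: keeping track of the identifications among vectors and $1$-forms, symmetric $2$-tensors and symmetric endomorphisms, $2$-forms and skew endomorphisms, so that the Killing form condition $(\nabla_XT)X=0$ can be applied in the right slot.
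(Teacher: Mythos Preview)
Your proof is correct and follows essentially the same approach as the paper, just written out in more detail. The paper dismisses (1) and (3) as obvious, handles (2) by the Leibniz rule exactly as you do, and for (4) computes $(\nabla_XS^\omega)(X,X)=2\langle (X\lrcorner\nabla_X\omega)^\#,(X\lrcorner\omega)^\#\rangle$ directly from the definition of $S^\omega$, which is the form-language version of your endomorphism computation with $T$ and $-T^2$.
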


\begin{proof} The first and third statements are obvious.
To check the second one, let $\xi,\eta$ be two Killing vector fields of $M$, and let $X$ be a vector field. Then $\nabla_X(\xi\cdot \eta)=\nabla_X\xi\cdot \eta+\xi\cdot \nabla_X\eta$, which is zero. Finally, from the definition of $S^\omega$ we get $(\nabla_XS^\omega)(X,X)=2\lela (X\lrcorner \nabla_X\omega)^\#, (X\lrcorner\ \omega)^\#\rira $ for any vector field $X$. 
\end{proof}

Let now $(N,g)$ be a simply connected 2-step nilpotent Lie group endowed with a left-invariant metric.

Every symmetric $k$-tensor $S$ on $N$ defines a function $\Omega_S$ on $\mn$ as in \eqref{eq.defOmegaS}. Then $\Omega_S(w)$ is a symmetric tensor on $\mn$ for all $w\in \mn$, that is, $\Omega_S(w)\in \mathrm{Sym}^k\mn\subset\mn^{\otimes^k}$. In particular, the left-invariant metric $\bil$ satisfies $\Omega_{\bil}(w)=\frac12 \sum_{i=1}^ne_i^2$ for all $w\in\mn$, if $\{e_1, \ldots,e_n\}$ is an orthonormal basis of $\mn$. 

From the definition of symmetric product of vector fields we immediately get
\begin{equation}\label{eq.Omegaprod}
\Omega_{X_1\cdot\ldots \cdot X_k}(w)=\Omega_{X_1}(w)\cdot \ldots\cdot\Omega_{X_k}(w), \quad \mbox{ for all }w\in \mn,
\end{equation}
for every vector fields $X_1, \ldots, X_k$ on $N$. 
This allows us to compute the functions corresponding to the symmetric product the Killing vector fields given in \eqref{eq.omegaxix} and \eqref{eq.omegaxiD}: 
\begin{eqnarray}
\Omega_{\xi_x\cdot\xi_{y}}(w)&=& x\cdot y -(x\cdot[w,y]+y\cdot [w,x])+[w,x]\cdot[w,y]\label{eq.symmprodxy}\\
\Omega_{\xi_x\cdot\xi_{D}}(w)&=&x\cdot Dw -([w,x]\cdot Dw+\frac12 x\cdot [w,Dw])+\frac12 [w,x]\cdot [w,Dw]\label{eq.symmprodxD}\\
\Omega_{\xi_D\cdot\xi_{D'}}(w)&=&Dw\cdot D'w-\frac12 \left(Dw\cdot [w,D'w]+D'w\cdot [w,Dw]\right)\nonumber\\
&&\quad +\frac14 [w,Dw]\cdot [w,D'w]\nonumber
\end{eqnarray}
These functions are in general non-constant, so they define symmetric Killing $2$-tensors on $N$ which are not always left-invariant.

Recall that if $S$ is a left-invariant $k$-tensor on $N$ and $x\in \mn$ then $\nabla_x S$ is also left-invariant. Thus from Definition \ref{def.symmkilltens} we have that $S$ is a symmetric Killing tensor if and only if \begin{equation}\label{cor.symkilln}
\lela (\nabla_xS)x,x\rira =0,\quad\mbox{for all } x \in \mn.
\end{equation}

\section{Left-invariant symmetric Killing $2$-tensors on $2$-step nilpotent Lie groups}

In this section we study which symmetric endomorphisms of $\mn$ define left-invariant symmetric Killing $2$-tensors on the 2-step nilpotent Lie group $N$ endowed with a left-invariant metric $g$. We start by giving a characterization of such endomorphisms in terms of their behavior with respect to the orthogonal decomposition $\mn=\mv\oplus\mz$.

\begin{pro}\label{pro.caracterizKilling}
A left-invariant symmetric $2$-tensor on $N$, identified with an element $S\in \Sym^2\mn$  as in Remark \ref{rem:leftS}, is a Killing tensor if and only if
\begin{equation} \label{eq.killtensor}
\left\{\begin{array}{l}
[Sx,y]=[x,Sy] , \mbox{ for all $x,y\in \mv$, }\\
 \ad_x \circ S|_\mz \mbox{ is skew-symmetric on } \mz \mbox{ for all $x\in \mv$}.
 \end{array}\right.
 \end{equation}
\end{pro}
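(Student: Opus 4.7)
The plan is to start from the equivalent characterization \eqref{cor.symkilln}, namely that a left-invariant $S$ is a Killing tensor iff $\lela (\nabla_x S)x, x\rira = 0$ for all $x \in \mn$, and to translate this into a condition on the blocks of $S$ with respect to the orthogonal splitting $\mn = \mv \oplus \mz$.

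First, since $S$ and every left-invariant vector field are left-invariant, the function $\bil(Sy,z)$ is constant on $N$ for left-invariant $y,z$, so $(\nabla_x S)(y,z) = -\bil(S\nabla_x y, z) - \bil(Sy, \nabla_x z)$. Setting $y = z = x$ and using the symmetry of $S$ reduces \eqref{cor.symkilln} to $\bil(Sx, \nabla_x x) = 0$ for every $x \in \mn$. Writing $x = v + z$ with $v \in \mv$ and $z \in \mz$, and applying \eqref{eq:nabla} to each summand of $\nabla_{v+z}(v+z)$, yields $\nabla_x x = -j(z) v \in \mv$.

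Next I would decompose $S$ into blocks $Sv = Av + B^* v$ for $v \in \mv$ and $Sz = Bz + Cz$ for $z \in \mz$, with $A \in \End(\mv)$ and $C \in \End(\mz)$ symmetric, $B \in \Hom(\mz, \mv)$, and $B^*$ its adjoint. Since $-j(z)v \in \mv$, the $\mz$-components $B^* v$ and $Cz$ of $Sx$ are orthogonal to it and drop out, leaving
\[
\bil(Av, j(z)v) + \bil(Bz, j(z)v) = 0 \qquad \text{for all } v \in \mv,\ z \in \mz.
\]
The two summands are homogeneous of bidegrees $(2,1)$ and $(1,2)$ in $(v,z)$, so the substitutions $v \mapsto tv$ and $z \mapsto sz$ force each one to vanish separately.

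Using the defining identity $\bil(j(z)u, w) = \bil(z, [u, w])$ on $\mv$, the first equation becomes $\bil(z, [Av, v]) = 0$ for all $v, z$; as $[Av, v] \in \mn' \subset \mz$ this forces $[Av, v] = 0$, whose polarization $[Av_1, v_2] = [v_1, Av_2]$ coincides with $[Sx, y] = [x, Sy]$ on $\mv$ because $[\mz, \mv] = 0$ makes the $\mz$-piece $B^* v$ of $Sv$ invisible under the bracket. The second equation reads $\bil(z, [v, Bz]) = 0$; polarizing in $z$ gives $\bil(z_1, [v, Bz_2]) + \bil(z_2, [v, Bz_1]) = 0$, which is exactly the statement that the endomorphism $z \mapsto [v, Bz] = [v, Sz] = (\ad_v \circ S|_\mz)(z)$ of $\mz$ is skew-symmetric. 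All steps are reversible, so both implications of \eqref{eq.killtensor} follow. The only delicate point is the bidegree scaling argument that decouples the conditions on the $\mv \to \mv$ block $A$ and the $\mz \to \mv$ block $B$ of $S$; as a byproduct one sees that the remaining blocks $B^*$ and $C$ are entirely unconstrained by the Killing equation.
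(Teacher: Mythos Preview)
Your proof is correct and follows essentially the same route as the paper's. The paper compresses the computation into the single identity $\lela(\nabla_y S)y,y\rira = 2\lela[y,Sy],y\rira$ obtained from Koszul's formula, and then separates by homogeneity in $z$ (linear vs.\ quadratic), which is exactly your bidegree argument; your version is simply more explicit about the block decomposition of $S$ and makes the useful side observation that the blocks $B^*$ and $C$ are unconstrained.
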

\begin{proof}
Let $S:\mn\lra\mn$ be a symmetric endomorphism. 
From \eqref{cor.symkilln}, $S$ is a Killing tensor if and only if $\lela (\nabla_yS)y,y\rira=0$ for all $y\in \mn$. By Koszul's formula we have 
\begin{equation}\label{eq.nablaxSyz}
\lela (\nabla_yS)y,y\rira=2\lela [y,Sy],y\rira,
\end{equation} and therefore $S$ is Killing if and only if 
$$\lela [y,Sy],y\rira=0\quad \mbox{ for  all }y\in\mn$$
(cf. \cite[Proposition 2.4.]{KOR}). Taking $y=x+z$ with $x\in \mv$ and $z\in \mz$, the last equality reads
\begin{equation}\label{eq.suma}
\lela z,[Sz,x]\rira+\lela z,[Sx,x]\rira=0\quad \mbox{ for all  }x\in \mv,z\in\mz.
\end{equation} The first term of this sum is quadratic in $z$, whilst the second one is linear. This implies that \eqref{eq.suma} is equivalent to
$$\lela z,[Sx,x]\rira=0\quad\mbox{ and }\quad  \lela z,[Sz,x]\rira=0\quad \mbox{ for all  }x\in \mv,z\in\mz.$$
By polarization, the first equation is equivalent to $[Sx,y]=[x,Sy]$ for all $x,y\in \mv$, and the second equation simply says that  $\ad_x\circ S|_{\mz}$ is a skew-symmetric endomorphism of $\mz$ for every $x\in \mv$.
\end{proof}

\begin{ex}\label{ex.v} Let $S\in {\Sym}^2\mn$ be a symmetric tensor such that $Sz=0$ for all $z\in \mz$ and $S|_\mv=\lambda {\rm Id}_\mv$ for some $\lambda \in\R$. It is clear that $S$ satisfies the system \eqref{eq.killtensor}, so $S$ determines a left-invariant Killing tensor on $N$.
\end{ex}

\begin{remark} Killing $2$-forms define symmetric Killing $2$-tensors as showed in Proposition \ref{pro.symmkilltens12}. On the other hand, Barberis et. al. \cite[Theorem 3.1.]{BDS}, proved that a skew-symmetric map $T:\mn\lra\mn$ defines a Killing $2$-form if and only if $T$ preserves $\mz$ and $\mv$, and verifies $[Tx,y]=[x,Ty]$ and $T[x,y]=3[Tx,y]$ for every $x,y\in \mv$ (see  also \cite{AD}).
If $T$ is such an endomorphism, then $S:=-T^2$ verifies the conditions \eqref{eq.killtensor}, so it defines indeed a symmetric Killing 2-tensor on $N$ (cf. Proposition \ref{pro.symmkilltens12} (4) above). 
\end{remark}

The decomposition $\mn=\mv\oplus\mz$ induces a splitting of the space of symmetric $2$-tensors on $\mn$ 
\begin{equation}\label{eq.decomvz}
{\rm Sym}^2\mn={\rm Sym}^2\mv\oplus (\mz \cdot \mv)\oplus {\rm Sym}^2\mz.
\end{equation} 
The elements of ${\rm Sym}^2\mv$, $\mz \cdot \mv$ and ${\rm Sym}^2\mz$ will be viewed as symmetric endomorphisms of $\mn$ by means of the standard inclusions.

\begin{defi}\label{def.m} Given $S\in {\rm Sym}^2\mn$, denote by $S^\mv$,  $S^{\rm m}$ and $S^\mz$ its components with respect to the above decomposition; we call $S^{\rm m}$ the mixed part of $S$. \end{defi}

\begin{pro} \label{pro.z} For every $S\in \Sym^2\mn$ we have
\begin{enumerate}
\item  the left-invariant symmetric tensor defined by $S^\mz$ is always Killing.
\item the left-invariant symmetric tensor defined $S$ is a symmetric Killing tensor if and only if the left-invariant symmetric tensors defined by $S^\mv$ and by $S^{\rm m}$ are Killing.
\end{enumerate}
\end{pro}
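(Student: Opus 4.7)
The plan is to apply the characterization of Killing tensors given in Proposition \ref{pro.caracterizKilling} componentwise, using carefully how each summand of the decomposition \eqref{eq.decomvz} acts on $\mv$ and on $\mz$ when viewed as an endomorphism of $\mn$. Explicitly, $S^\mv$ restricts to a symmetric endomorphism of $\mv$ and vanishes on $\mz$; $S^\mz$ restricts to a symmetric endomorphism of $\mz$ and vanishes on $\mv$; and $S^{\rm m}$ sends $\mv$ into $\mz$ and $\mz$ into $\mv$ (as the symmetric endomorphism associated with the mixed part of a bilinear form on $\mv\oplus\mz$). The key algebraic fact I will exploit is that $\mz$ is the center, so any bracket with a vector already in $\mz$ is zero.

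For part (1), both conditions of \eqref{eq.killtensor} are trivially satisfied by $S^\mz$: the first because $S^\mz x=0$ for $x\in\mv$, and the second because $S^\mz z\in\mz$, so $\ad_x\circ S^\mz|_\mz\equiv 0$ for every $x\in\mv$.

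For part (2), the idea is to show that the two conditions of Proposition \ref{pro.caracterizKilling} separate across the decomposition \eqref{eq.decomvz}. Writing $S=S^\mv+S^{\rm m}+S^\mz$ and taking $x,y\in\mv$, the contribution $[S^\mz x,y]$ vanishes because $S^\mz x=0$, while $[S^{\rm m} x,y]=0$ because $S^{\rm m} x\in\mz$; hence the first condition $[Sx,y]=[x,Sy]$ reduces to $[S^\mv x,y]=[x,S^\mv y]$, which is precisely the first condition for $S^\mv$. Similarly, for $x\in\mv$ and $z\in\mz$, one has $S^\mv z=0$ and $S^\mz z\in\mz$, so $[x,S^\mz z]=0$, and therefore $\ad_x\circ S|_\mz=\ad_x\circ S^{\rm m}|_\mz$; this equates the second condition for $S$ with the second condition for $S^{\rm m}$. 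Combined with the (trivial) observation that $S^\mv$ automatically satisfies the second condition and $S^{\rm m}$ automatically satisfies the first (since $S^{\rm m}$ maps $\mv$ into the center), this will give the desired equivalence.

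The argument is essentially bookkeeping using that $\mn'\subset\mz$, so I do not expect any genuine obstacle; the only thing to be careful about is to check that each of the brackets appearing in \eqref{eq.killtensor} indeed decomposes linearly under $S\mapsto (S^\mv,S^{\rm m},S^\mz)$, which follows from the concrete description of the three components as endomorphisms of $\mn$.
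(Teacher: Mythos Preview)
Your proposal is correct and follows essentially the same argument as the paper's own proof: both reduce directly to Proposition~\ref{pro.caracterizKilling} and observe that, because $S^\mz x=0$, $S^{\rm m}x\in\mz$, $S^\mv z=0$, and $S^\mz z\in\mz$, the first condition of \eqref{eq.killtensor} for $S$ coincides with that for $S^\mv$, while the second coincides with that for $S^{\rm m}$ (with $S^\mz$ satisfying both trivially). Your version is slightly more explicit in noting that $S^\mv$ automatically satisfies the second condition and $S^{\rm m}$ the first, but this is the same bookkeeping the paper is doing.
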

\begin{proof} (1) Any element of ${\rm Sym}^2\mz$ defines a left-invariant symmetric Killing tensor because it trivially verifies the system \eqref{eq.killtensor}.

(2) Notice that $[Sx,y]=[S^\mv x,y]$ and $ \ad_x S z=\ad_x S^{\rm m} z$ for all $x,y\in \mv$, $z\in\mz$. Moreover, $[S^{\rm m}x, y]=0$ and $S^\mv z=0$ for every $x,y\in \mv$, $z\in\mz$. Therefore, $S$ verifies the system \eqref{eq.killtensor} if and only if both $S^{\rm m}$ and $S^\mv$ verify it.
\end{proof}

\begin{defi} A $2$-step nilpotent Lie algebra $\mn$ is called non-singular if $\ad_x:\mv\lra \mz$ is surjective for all $x\in \mv\setminus \{0\}$.
\end{defi}

\begin{pro} \label{cor.Killnonsing} Let $\mn$ be a non-singular $2$-step nilpotent Lie algebra and let $S\in {\Sym}^2\mn$. If $S$ defines a Killing tensor on $N$ then $S^{\rm m}=0$.
\end{pro}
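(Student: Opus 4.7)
The plan is to reduce the statement to a single clean linear-algebraic identity on $\mz$, and then exploit the non-singularity assumption to kill it.

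First, by Proposition \ref{pro.z}(2) it is enough to show that if $S\in \mz\cdot \mv$ is purely mixed and defines a Killing tensor, then $S=0$. For such an $S$, write $T\ce S|_\mz:\mz\lra\mv$; then its adjoint is $T^*=S|_\mv:\mv\lra\mz$. Since $T^*x\in \mz$ lies in the center for every $x\in\mv$, the first condition of Proposition \ref{pro.caracterizKilling} is trivially satisfied, so the whole content of the Killing equation is the second one: $\ad_x\circ T$ is skew-symmetric on $\mz$ for every $x\in\mv$. Spelled out, this reads
\[
\lela [x,Tz],z'\rira+\lela [x,Tz'],z\rira=0\qquad\text{for all }x\in\mv,\ z,z'\in\mz.
\]

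The key step is to rewrite this using the defining property \eqref{eq:jota} of the $j$ map. Because $Tz,Tz'\in\mv$, we have $\lela [x,Tz],z'\rira=\lela j(z')x,Tz\rira$, and using the skew-symmetry of $j(z')$ one gets $\lela j(z')x,Tz\rira=-\lela x,j(z')Tz\rira$. The same manipulation on the second term yields
\[
\lela x,j(z')Tz+j(z)Tz'\rira=0\qquad\text{for all }x\in\mv,
\]
so that $j(z')Tz+j(z)Tz'=0$ in $\mv$ for all $z,z'\in\mz$. Setting $z'=z$ gives the crucial identity $j(z)Tz=0$ for every $z\in\mz$.

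Finally, I would invoke non-singularity. By the definition in the statement and the standard duality between $\ad_x$ and $j$ (namely $\lela j(z)x,y\rira=\lela z,[x,y]\rira$ identifies, up to the metric, $j(\cdot)x:\mz\lra\mv$ with $\ad_x^*$), the surjectivity of $\ad_x:\mv\lra\mz$ for every $x\in\mv\setminus\{0\}$ is equivalent to the invertibility of $j(z):\mv\lra\mv$ for every $z\in\mz\setminus\{0\}$. Combined with $j(z)Tz=0$, this forces $Tz=0$ for all $z\neq 0$ and thus $T=0$, i.e. $S|_\mz=0$; since $S$ was purely mixed and symmetric, also $S|_\mv=T^*=0$, so $S^{\rm m}=0$. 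I do not anticipate any serious obstacle: the only non-trivial point is identifying non-singularity with pointwise invertibility of $j$, which is a routine duality but should probably be stated explicitly for the reader's convenience.
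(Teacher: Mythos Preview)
Your proof is correct and follows essentially the same route as the paper: both use the diagonal case $z'=z$ of the second Killing condition to obtain (in your notation) $j(z)Tz=0$, and then invoke non-singularity to conclude $Tz=0$. The only cosmetic differences are that the paper works directly with $S$ rather than first reducing to the mixed part, and phrases the conclusion via surjectivity of $\ad_{Tz}$ rather than the equivalent invertibility of $j(z)$; your intermediate polarized identity $j(z')Tz+j(z)Tz'=0$ is true but unnecessary, since only the diagonal is used.
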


\begin{proof} Since $\mn$ is non-singular, we have ${\Im}({\ad}_y)=\mz$ for all $y\in \mv\setminus \{0\}$. Let $z$ be any element of $\mz$. Then $\lela [x,Sz],z\rira=0$ for all $x\in \mv$, because $S$ is Killing, and thus satisfies \eqref{eq.killtensor}. This implies $z\bot {\rm Im} \ad_{y}$ for $y:=\pr_\mv Sz\in \mv$, which, as $\mn$ is non-singular, implies that either $z=0$ or $y=0$. Therefore in both cases $\pr_\mv Sz=0$, meaning that $Sz\in\mz$ for every $z\in\mz$. 
\end{proof}

\begin{ex}\label{ex.heis}
Let $\mh_n$ be the Heisenberg Lie algebra of dimension $2n+1$. The Lie algebra $\mh_n$ has a basis $\{x_1,\ldots, x_n,y_1, \ldots,y_n,z\}$ where the non-trivial Lie brackets are
$[x_i,y_i]=z$ and $\mz=\R z$ is the center. This is a non-singular Lie algebra and 
$j(z)=:J$ is the canonical complex structure in $\R^{2n}$. Consider the inner product making this basis an orthonormal basis. By Proposition \ref{cor.Killnonsing}, the mixed part $S^{\rm m}$ of any symmetric Killing tensor $S$ vanishes. Moreover, the system \eqref{eq.killtensor} is equivalent to $[S^\mv,J]=0$. We conclude that $S$ is a symmetric Killing tensor on $\mh_n$ if and only if it preserves the decomposition $\mh_n=\mv\oplus \mz$ and its restriction to $\mv$ commutes with $J$ (cf. \cite[Theorem 3.2]{KOR}).
\end{ex}

The following example shows that there exist $2$-step nilpotent metric Lie algebras carrying left-invariant symmetric Killing $2$-tensors $S$ with $S^{\rm m}\ne 0$, that is, not preserving the decomposition $\mv\oplus\mz$. 

\begin{ex}\label{ex.2step}
Let $\mn$ be the Lie algebra of dimension $6$ having an orthonormal basis $\{e_1, \ldots, e_6\}$ satisfying the non-zero bracket relations $$[e_1,e_2]=e_4, \quad [e_1,e_3]=e_5,\quad[e_2,e_3]=e_6.$$
Then $\mz= {\rm span}\{e_4, e_5,e_6\}=\mn'$ and $\mv={\rm span}\{e_1,e_2,e_3\}$. 

An element $S\in \mz\cdot \mv$ defines a symmetric Killing tensor on the simply connected Lie group with Lie algebra $\mn$ if and only if $\lela \ad_x S z,z\rira=0$ for all $x\in\mv$, $z\in \mz$. Let $Se_4=ae_1+be_2+ce_3$, then $b=\lela [e_1,S e_4] ,e_4\rira$ and $a=\lela [e_2,S e_4] ,e_4\rira$ are zero thus $Se_4=ce_3$. Analogously, one obtains that $Se_5$ and $Se_6$ are multiples of $e_2$ and $e_1$, respectively. Moreover, $\lela[e_2,Se_6],e_4\rira+\lela[e_2,Se_4],e_6\rira=0$ implies that $S(e_6)=-ce_1$, and $\lela[e_1,Se_5],e_4\rira+\lela[e_1,Se_4],e_5\rira=0$ implies that $S(e_5)=ce_2$, so finally $S=c(e_1\cdot e_6-e_2\cdot e_5+e_3\cdot e_4)$.

Proposition \ref{pro.z} (1) and Example \ref{ex.v} show that every symmetric $2$-tensor in $\Sym^2\mz$ and the symmetric tensor $\sum_{i=1}^3 e_i\cdot e_i\in \Sym^2\mv$ define left-invariant Killing tensors on $N$. Therefore, the same holds for
$$a_1 \sum_{i=1}^3 e_i\cdot e_i+a_2(e_1\cdot e_6-e_2\cdot e_5+e_3\cdot e_4)+\sum_{4\leq i\leq j\leq 6}a_{ij}e_i\cdot e_j, \qquad a_i,a_{ij}\in \R.$$
It will follow from the results in the next section that any left-invariant symmetric Killing $2$-tensor on $N$ is induced by an element of this form.
\end{ex}
\medskip

To continue we study left-invariant symmetric Killing $2$-tensors on Riemannian products of 2-step nilpotent Lie groups. A nilpotent Lie algebra endowed with an inner product $(\mn,\bil)$ is called {\em reducible} if it can be written as an orthogonal direct sum of ideals $\mn=\mn_1\oplus\mn_2$. In this case we endow $\mn_i$ with the inner product $\bil_i$ which is the restriction of $\bil$ to $\mn_i$, for each $i=1,2$. Otherwise, $(\mn,g)$ is called {\em irreducible}.

\begin{remark} \label{rem.j}
\begin{enumerate}
\item If $\mn$ is a reducible 2-step nilpotent Lie algebra, then $\dim\mz\geq 2$. Indeed, if it decomposes as a direct sum of orthogonal ideals $\mn=\mn_1\oplus\mn_2$, then the center of $\mn$ is $\mz=\mz_1\oplus\mz_2$ where $\mz_i$ is the center of $\mn_i$, and both $\mz_i$ are non-zero.
\item \label{it.remj} If $\mn$ is an irreducible 2-step nilpotent Lie algebra, then $j:\mz\lra \so(\mv)$ is  injective.
 Indeed, it is easy to check that the kernel  $\ma$ of $j:\mz\lra \so(\mv)$ (which is equal to $\mz\cap \mn'^\bot$) and its  orthogonal $\ma^\bot$ in $\mn$ are both ideals of $\mn$. Therefore, if $\mn$ is irreducible, then $\ma=0$ and thus  $j$ is injective.
 \end{enumerate}
 \end{remark}

The next results aim to show that left-invariant symmetric Killing $2$-tensors on direct products of 2-step nilpotent Lie groups are determined by left-invariant symmetric Killing $2$-tensors on its factors.

Let $\mn$ be a 2-step nilpotent Lie algebra which is decomposable and let $\mn_1$, $\mn_2$ be orthogonal ideals such that $\mn=\mn_1\oplus\mn_2$. Then
\begin{eqnarray}
\label{eq.n12}
\Sym^2\mn=\Sym^2\mn_1\oplus (\mn_1\cdot \mn_2)\oplus \Sym^2\mn_2.
\end{eqnarray}
Given $S\in \Sym^2\mn$ we denote $S_i\in \Sym^2\mn_i$ and $S_{\rm nd}\in
\mn_1\cdot\mn_2$ the symmetric tensors in $\mn$ such that $S=S_1+S_{\rm nd}+S_2$
($S_{\rm nd}$ is the non-diagonal part of $S$).

\begin{pro} \label{pro.idealsn}
Let $S\in \Sym^2\mn$.
\begin{enumerate}
\item The left-invariant tensor defined by $S$ is a Killing tensor if and only if
$S_1, S_2,S_{\rm nd}$ are Killing tensors on $\mn$.
\item If $S$ defines a left-invariant Killing tensor then its non-diagonal part $S_{\rm nd}$ satisfies
$S_{\rm nd}\in \mz_1\cdot \mz_2$.
\end{enumerate}
\end{pro}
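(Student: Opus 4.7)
My plan is to reduce both parts to the pointwise Killing criterion $\langle[y,Sy],y\rangle=0$ for all $y\in\mn$ (which underlies Proposition \ref{pro.caracterizKilling}) and then exploit the bigrading induced by $\mn=\mn_1\oplus\mn_2$. First I would record the compatible splittings of the center and its orthogonal complement: since $[\mn_1,\mn_2]=0$ and the two ideals are orthogonal, one has $\mz=\mz_1\oplus\mz_2$ and consequently $\mv=\mv_1\oplus\mv_2$, where $\mv_i$ is the orthogonal complement of $\mz_i$ in $\mn_i$.

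For part (1), I would decompose $y=y_1+y_2$ with $y_i\in\mn_i$ and use the fact that, viewed as endomorphisms, $S_1$ and $S_2$ preserve their respective ideals while $S_{\rm nd}$ exchanges them. Together with $[\mn_1,\mn_2]=0$ this gives the clean expansion
\[
\langle[y,Sy],y\rangle=\langle[y_1,S_1y_1],y_1\rangle+\langle[y_1,S_{\rm nd}y_2],y_1\rangle+\langle[y_2,S_{\rm nd}y_1],y_2\rangle+\langle[y_2,S_2y_2],y_2\rangle.
\]
Rescaling $y_1\to ty_1$ and $y_2\to sy_2$ shows that the four summands are homogeneous of bidegrees $(3,0)$, $(2,1)$, $(1,2)$, $(0,3)$ in $(t,s)$, so they must vanish independently for $S$ to be Killing. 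Observing that the first and last summands are exactly the pointwise Killing criteria for $S_1$ and $S_2$ (extended by zero to the other ideal), and that the middle two together form the pointwise criterion for $S_{\rm nd}$, one obtains (1).

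For part (2), assume $S_{\rm nd}$ is Killing, so in particular $\langle[y_1,S_{\rm nd}y_2],y_1\rangle=0$ for every $y_1\in\mn_1$ and $y_2\in\mn_2$. Setting $u:=S_{\rm nd}y_2\in\mn_1$ and polarizing in $y_1$ yields that $\ad_u:\mn_1\to\mn_1$ is skew-symmetric with respect to $g$. Applied to $x\in\mv_1$ and $z\in\mz_1$ this reads $\langle[u,x],z\rangle=-\langle x,[u,z]\rangle=0$; since $[u,x]\in\mn_1'\subseteq\mz_1$, the vanishing of this pairing forces $[u,\mv_1]=0$ and hence $u\in\mz_1$. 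Thus $S_{\rm nd}(\mn_2)\subseteq\mz_1$ and, symmetrically, $S_{\rm nd}(\mn_1)\subseteq\mz_2$. I would then use that $S_{\rm nd}$ is a symmetric bilinear form: for $x_1\in\mv_1$ and $y_2\in\mn_2$ one has $\langle S_{\rm nd}x_1,y_2\rangle=\langle x_1,S_{\rm nd}y_2\rangle=0$, because $S_{\rm nd}y_2\in\mz_1$ is orthogonal to $x_1\in\mv_1$; taking $y_2=S_{\rm nd}x_1\in\mn_2$ gives $S_{\rm nd}x_1=0$, and symmetrically $S_{\rm nd}|_{\mv_2}=0$. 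Therefore $S_{\rm nd}$ annihilates $\mv$ and exchanges $\mz_1$ with $\mz_2$, which is precisely the statement $S_{\rm nd}\in\mz_1\cdot\mz_2$.

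The main obstacle is the skew-symmetry step in (2): this is the only place where the $2$-step hypothesis is used in an essential way, via the inclusion $[\mn_1,\mn_1]\subseteq\mz_1$ that collapses the skew-symmetry relation for $\ad_u$ onto the single pairing $\langle[u,\mv_1],\mz_1\rangle$ and forces $u$ into $\mz_1$. Everything else is formal bookkeeping with the bigraded decomposition \eqref{eq.n12}.
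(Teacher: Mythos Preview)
Your proof is correct, and the route differs from the paper's in a way worth noting.

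For part (1), the paper applies the two--equation characterization \eqref{eq.killtensor} directly: it checks that $S_1$ (and symmetrically $S_2$) satisfies both equations by restricting $x,y,z$ to the appropriate ideal and observing $[S_1x,y]=[Sx_1,y_1]$, etc.; $S_{\rm nd}$ is then Killing by linearity. You instead go back to the underlying cubic criterion $g([y,Sy],y)=0$ and separate by bidegree in $(y_1,y_2)$. Both arguments are short; yours has the advantage of packaging the $S_{\rm nd}$ statement into the same bidegree computation rather than obtaining it only as ``$S-S_1-S_2$''.

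For part (2), the paper uses the second equation of \eqref{eq.killtensor} for $S_{\rm nd}$ with $z=z_1+z_2\in\mz$ and $x_1\in\mv_1$ to get $g(z_1,[S_{\rm nd}z_2,x_1])=0$, concluding $S_{\rm nd}(\mz_2)\subseteq\mz_1$; the remaining step (that $S_{\rm nd}$ kills $\mv_1\oplus\mv_2$) is left implicit. Your polarization of the bidegree-$(2,1)$ identity gives directly that $\ad_u$ is skew-symmetric on $\mn_1$ for $u=S_{\rm nd}y_2$ with $y_2$ ranging over all of $\mn_2$, so you obtain the stronger inclusion $S_{\rm nd}(\mn_2)\subseteq\mz_1$ in one stroke and then make the vanishing on $\mv_1\oplus\mv_2$ explicit via the symmetry of $S_{\rm nd}$. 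This is a genuinely different (and somewhat more self-contained) argument: it does not presuppose the processed form \eqref{eq.killtensor}, at the modest cost of redoing a polarization. Your remark that the $2$-step hypothesis enters precisely through $[\mn_1,\mn_1]\subseteq\mz_1$ in the skew-symmetry step is accurate.
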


\begin{proof} (1) If $S$ is Killing, for every $x=x_1+x_2$ and $y=y_1+y_2$ in
$\mv=\mv_1\oplus\mv_2$ and  for every $z=z_1+z_2\in\mz=\mz_1\oplus\mz_2$, we have by
\eqref{eq.killtensor}:
$$[S_1x,y]-[x,S_1y]=[S_1x_1,y_1]-[x_1,S_1y_1]=[Sx_1,y_1]-[x_1,Sy_1]=0$$
and 
$$g(z,[S_1z,x])=g(z_1,[S_1z_1,x_1])=g(z_1,[Sz_1,x_1])=0,$$
thus showing that $S_1$ is Killing, and similarly $S_2$ is Killing too. By
linearity, $S_{\rm nd}$ is also Killing. The converse is clear.

(2) If $S$ is Killing, then $S_{\rm nd}$ is Killing so for all
$z=z_1+z_2\in\mz=\mz_1\oplus\mz_2$ and $x_1\in\mv_1$ we have
$$0=g(z,[S_{\rm nd}z,x_1])=g(z_1,[S_{\rm nd}z_2,x_1]),$$
thus showing that the subspace $S_{\rm nd}(\mz_2)$ of $\mn_1$ is actually contained
in $\mz_1$. 
\end{proof}

It will be useful later to compare the above decomposition of $S$ on a direct sum $\mn_1\oplus \mn_2$ with the one introduced in Definition \ref{def.m}. Consider the decomposition $\mn=\mv\oplus\mz$ and $\mn_i=\mv_i\oplus\mz_i$ for $i=1,2$ (one of
the $\mv_i$ might also vanish), then $\mz=\mz_1\oplus\mz_2$ and $\mv=\mv_1\oplus\mv_2$.

\begin{pro} \label{pro.decred} If $S\in \Sym^2(\mn_1\oplus \mn_2)$ defines a left-invariant Killing tensor on $N$ then $S^\mv=S_1^{\mv_1}+S_2^{\mv_2}$, $S^\mz=S_1^{\mz_1}+S_2^{\mz_2}+S_{\rm nd}$, and its mixed part is $S^{\rm m}=S_1^{\rm m}+S_2^{\rm m}$, where $S_i^{\rm m}$ denotes the mixed part of $S_i$ as a tensor on $\mn_i$.
\end{pro}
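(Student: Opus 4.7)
The statement is a direct bookkeeping consequence of Proposition~\ref{pro.idealsn}(2). My plan is first to observe how the diagonal pieces $S_1, S_2$ sit inside the decomposition \eqref{eq.decomvz} of $\Sym^2\mn$, and then to apply Proposition~\ref{pro.idealsn}(2) to place $S_{\rm nd}$.

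First I would decompose each $S_i\in\Sym^2\mn_i$ with respect to $\mn_i=\mv_i\oplus\mz_i$ as in Definition~\ref{def.m}, writing $S_i=S_i^{\mv_i}+S_i^{\rm m}+S_i^{\mz_i}$ with $S_i^{\mv_i}\in\Sym^2\mv_i$, $S_i^{\rm m}\in\mz_i\cdot\mv_i$ and $S_i^{\mz_i}\in\Sym^2\mz_i$. Since $\mv=\mv_1\oplus\mv_2$ and $\mz=\mz_1\oplus\mz_2$, the canonical inclusions $\Sym^2\mv_i\hookrightarrow\Sym^2\mv$, $\mz_i\cdot\mv_i\hookrightarrow\mz\cdot\mv$ and $\Sym^2\mz_i\hookrightarrow\Sym^2\mz$ are immediate. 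Hence with respect to \eqref{eq.decomvz} we have $(S_1+S_2)^\mv=S_1^{\mv_1}+S_2^{\mv_2}$, $(S_1+S_2)^{\rm m}=S_1^{\rm m}+S_2^{\rm m}$ and $(S_1+S_2)^\mz=S_1^{\mz_1}+S_2^{\mz_2}$.

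Next I would invoke Proposition~\ref{pro.idealsn}(2): since $S$ is Killing, $S_{\rm nd}\in\mz_1\cdot\mz_2\subset\Sym^2\mz$. Therefore $S_{\rm nd}$ contributes nothing to the $\mv$-part or the mixed part and the whole of itself to the $\mz$-part. Adding these contributions to those of $S_1+S_2$ yields the three identities of the statement.

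I do not expect any serious obstacle. The only structural point worth highlighting is that, a priori (without the Killing assumption), the non-diagonal part $S_{\rm nd}\in\mn_1\cdot\mn_2$ could sit in any of the four summands $\mv_1\cdot\mv_2$, $\mv_1\cdot\mz_2$, $\mz_1\cdot\mv_2$, $\mz_1\cdot\mz_2$, which feed respectively $\Sym^2\mv$, $\mz\cdot\mv$, $\mz\cdot\mv$, $\Sym^2\mz$ in \eqref{eq.decomvz}; it is exactly Proposition~\ref{pro.idealsn}(2) that kills the first three possibilities and makes the asserted identities hold.
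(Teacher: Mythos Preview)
Your proposal is correct and follows essentially the same approach as the paper: decompose each $S_i$ according to $\mn_i=\mv_i\oplus\mz_i$, note the obvious inclusions into the summands of \eqref{eq.decomvz}, and use Proposition~\ref{pro.idealsn}(2) to place $S_{\rm nd}$ in $\mz_1\cdot\mz_2\subset\Sym^2\mz$. Your closing remark on why the Killing hypothesis is essential is a nice addition not spelled out in the paper.
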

\begin{proof}
Consider the decompositions $S_i=S_i^{\mv_i}+S_i^{\mz_i}+S_i^{\rm m}$ of $S_i$ from Definition \ref{def.m}. In the expression 
$$S=(S_1^{\mv_1}+S_2^{\mv_2})+(S_1^{\rm m}+S_2^{\rm m})+(S_1^{\mz_1}+S_2^{\mz_2}+S_{\rm nd})$$
we clearly have $S_1^{\mv_1}+S_2^{\mv_2}\in \mathrm{Sym}^2\mv$, $S_1^{\rm m}+S_2^{\rm m}\in\mv\cdot\mz$, and
according to Proposition \ref{pro.idealsn}, $S_1^{\mz_1}+S_2^{\mz_2}+S_{\rm nd}\in \mathrm{Sym}^2\mz$. 
\end{proof}

\section{(In)decomposable symmetric Killing tensors}

Let $N$ be a 2-step nilpotent Lie group endowed with a left-invariant metric $\bil$ and let $\mn$ be its Lie algebra. By Proposition \ref{pro.symmkilltens12}, every linear combination of symmetric products of Killing vector fields is a symmetric Killing 2-tensor on $N$. The metric and, more generally, any parallel symmetric Killing tensors are trivially Killing tensors. We are interested in Killing tensors which are not of these two types, so we introduce the following terminology.

\begin{defi} A left-invariant symmetric Killing $2$-tensor $S$ is called {\em \name} if it is the sum of a parallel tensor and a linear combination of symmetric products of Killing vector fields. If this is not the case, we say that $S$ is {\em indecomposable}. \end{defi}

Note that from Theorem \ref{teo.parallel} in the Appendix, every parallel symmetric tensor on a 2-step nilpotent Lie group is left-invariant.

\begin{remark}\label{rem.multmetr}
Proposition \ref{pro.paralleltensors} in the Appendix shows that the eigenspaces of any parallel symmetric tensor induce a decomposition of $\mn$ in an orthogonal direct sum of ideals. Thus, if $\mn$ is irreducible,   the only parallel symmetric $2$-tensors are the multiples of the metric.
\end{remark}

Our first goal is to show that the decomposability of a left-invariant symmetric Killing tensor $S$ only depends on the decomposability of its component $S^\mv$ (which by Proposition \ref{pro.z} is also a Killing tensor).

\begin{lm} 
\label{lm.Senz} 
Every left-invariant symmetric tensor on $N$ defined by an element  $S\in \Sym^2\mz$ is a decomposable Killing $2$-tensor. 
\end{lm}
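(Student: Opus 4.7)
The plan is to exhibit $S$ explicitly as a linear combination of symmetric products of Killing vector fields, with no parallel part needed. The key observation is that elements of the center give rise to \emph{left-invariant} Killing vector fields, so their symmetric products will produce left-invariant Killing $2$-tensors whose associated constant $\Omega$-functions live precisely in $\Sym^2\mz$.

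First I would recall that for $z\in\mz$, the Killing vector field $\xi_z$ defined in \eqref{eq.omegaxix} is left-invariant: indeed, $[w,z]=0$ for every $w\in\mn$ since $z$ is central, so \eqref{eq.xi} gives $\Omega_{\xi_z}(w)=z$ for every $w\in\mn$. This was already noted in the paragraph following \eqref{eq.xi}. Next, fix an orthonormal basis $\{z_1,\dots,z_k\}$ of $\mz$. For every pair of indices $i,j$, Proposition \ref{pro.symmkilltens12}(2) says that $\xi_{z_i}\cdot\xi_{z_j}$ is a symmetric Killing $2$-tensor on $N$, and by formula \eqref{eq.Omegaprod} combined with the computation above,
\begin{equation*}
\Omega_{\xi_{z_i}\cdot\xi_{z_j}}(w)=\Omega_{\xi_{z_i}}(w)\cdot\Omega_{\xi_{z_j}}(w)=z_i\cdot z_j\qquad\forall w\in\mn.
\end{equation*}
In particular, $\xi_{z_i}\cdot\xi_{z_j}$ is itself left-invariant, and under the identification of Remark \ref{rem:leftS} it corresponds to $z_i\cdot z_j\in\Sym^2\mz$.

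To conclude, write $S\in\Sym^2\mz$ in the basis as $S=\tfrac12\sum_{i,j}s_{ij}\,z_i\cdot z_j$ (cf.~\eqref{eq.S}) with $s_{ij}=s_{ji}$. Then
\begin{equation*}
S=\tfrac12\sum_{i,j}s_{ij}\,\Omega_{\xi_{z_i}\cdot\xi_{z_j}},
\end{equation*}
so the corresponding left-invariant symmetric tensor field on $N$ is the linear combination $\tfrac12\sum_{i,j}s_{ij}\,\xi_{z_i}\cdot\xi_{z_j}$ of symmetric products of Killing vector fields. This shows $S$ is decomposable; its Killing property was already granted by Proposition \ref{pro.z}(1). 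There is no real obstacle here: the only thing to verify is the tautology that the constant $\mn$-valued functions realizing central Killing fields multiply correctly under the symmetric product, which is immediate from \eqref{eq.Omegaprod}.
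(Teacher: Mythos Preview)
Your proof is correct and follows essentially the same approach as the paper's own proof: both exploit that for central $z$ the Killing field $\xi_z$ has constant $\Omega$-function equal to $z$, compute $\Omega_{\xi_{z_i}\cdot\xi_{z_j}}=z_i\cdot z_j$ via the product formula, and then expand $S$ in an orthonormal basis of $\mz$ to write it as a linear combination of these symmetric products. Your write-up is merely more expansive.
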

\begin{proof} For $z,z'\in \mz$, the symmetric product of the Killing vectors they define is $\Omega_{\xi_z\cdot\xi_{z'}}=z\cdot z'$, by \eqref{eq.xi}. According to \eqref{eq.S}, if $\{z_1, \ldots, z_n\}$ an orthonormal basis  of $\mz$, then  $S=\frac12 \sum_{1\leq i, j\leq n} \lela S z_i,z_j\rira \Omega_{\xi_{z_i}\cdot\xi_{z_j}}$.
\end{proof}

\begin{lm}\label{lm.jv}
Every left-invariant symmetric Killing $2$-tensor defined by an element $S\in \mz\cdot\mv$ is decomposable.
\end{lm}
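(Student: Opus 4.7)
The plan is to exhibit $S$ explicitly as a symmetric product of Killing vector fields, no derivation pieces needed. Fix an orthonormal basis $\{z_1,\ldots,z_k\}$ of $\mz$, and propose
$$K := \sum_{i} \xi_{z_i}\cdot \xi_{Sz_i},$$
noting that each $z_i$ is central and each $Sz_i$ lies in $\mv$ (since $S\in\mz\cdot\mv$). I would argue that, as tensor fields on $N$, $K$ equals the left-invariant tensor defined by $S$; since $K$ is a sum of symmetric products of Killing vector fields, this would prove the statement.

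Next, I would verify the identity $\Omega_K=S$ using \eqref{eq.xi}. For a central element $z_i$ we have $\Omega_{\xi_{z_i}}(w)=z_i$, while for $Sz_i\in\mv$ we get $\Omega_{\xi_{Sz_i}}(w)=Sz_i-[w,Sz_i]$. By \eqref{eq.Omegaprod} this yields
$$\Omega_K(w)=\sum_i z_i\cdot Sz_i \;-\; \sum_i z_i\cdot[w,Sz_i].$$
The first sum equals $S$ by the standard basis expansion \eqref{eq.S}: indeed, if $s_{i\alpha}=g(Sz_i,e_\alpha)$ for an orthonormal basis $\{e_\alpha\}$ of $\mv$, then $\sum_i z_i\cdot Sz_i=\sum_{i,\alpha}s_{i\alpha}\,z_i\cdot e_\alpha=S$.

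The crux is therefore showing that the $w$-dependent remainder $\sum_i z_i\cdot[w,Sz_i]$ vanishes identically, and this is precisely where the Killing condition enters. Writing $w=w_\mv+w_\mz$, the central part contributes nothing, so we may assume $w=x\in\mv$, and $[x,Sz_i]\in\mz$. Expanding in the basis,
$$\sum_i z_i\cdot [x,Sz_i]=\sum_{i,j}g([x,Sz_i],z_j)\,z_i\cdot z_j=\sum_{i,j}g(j(z_j)x,Sz_i)\,z_i\cdot z_j,$$
using the definition \eqref{eq:jota} of $j$. The second condition in \eqref{eq.killtensor} says that $\ad_x\circ S|_\mz$ is skew on $\mz$, which after unfolding gives $g(j(z_j)x,Sz_i)+g(j(z_i)x,Sz_j)=0$; hence the coefficients are antisymmetric in $(i,j)$ while $z_i\cdot z_j$ is symmetric, and the double sum vanishes.

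Putting the two computations together, $\Omega_K(w)=S$ for all $w\in\mn$, so $K$ is a left-invariant tensor field equal to $S$, and by construction it is a linear combination of symmetric products of Killing vector fields, proving decomposability. I do not anticipate a genuine obstacle here: the whole argument is essentially the bookkeeping needed to translate the Killing condition into the antisymmetry that forces the potential non-left-invariant term to cancel; the main thing to get right is keeping track of which Killing vectors are left-invariant (the central ones) and which are not (those attached to $Sz_i\in\mv$), so that the offending $[w,Sz_i]$-terms are exactly what the Killing condition has been designed to kill.
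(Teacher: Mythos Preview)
Your proof is correct and follows essentially the same approach as the paper: both take $K=\sum_s \xi_{z_s}\cdot\xi_{Sz_s}$, compute $\Omega_K(w)=\sum_s z_s\cdot Sz_s-\sum_s z_s\cdot[w,Sz_s]$, and use the Killing condition \eqref{eq.killtensor} to see that the $w$-dependent piece vanishes. The only cosmetic difference is that the paper checks the equality $\Omega_K=S$ by evaluating on pairs $(x,z)$ and $(z,z')$, whereas you work directly with the basis expansion and the antisymmetry of the coefficients; these are equivalent bookkeeping choices.
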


\begin{proof} Let $\{z_1, \ldots,z_n\}$ be an orthonormal basis of $\mz$. We claim that $S=\sum_{s=1}^n \Omega_{\xi_{z_s}\cdot\xi_{Sz_s}}$. By \eqref{eq.symmprodxy},
$\Omega_{\xi_{z_s}\cdot\xi_{Sz_s}}(w)= z_s\cdot Sz_s-z_s\cdot [w,Sz_s]$, and thus
$\sum_{s=1}^n \Omega_{\xi_{z_s}\cdot\xi_{Sz_s}}$ has no component in $\Sym^2\mv$. Moreover, for $z,z'\in\mz$ one has
\begin{eqnarray*}
\sum_{s=1}^n \Omega_{\xi_{z_s}\cdot\xi_{Sz_s}}(w)(z,z')&=&-\sum_{s=1}^n\lela z_s,z\rira\lela [w,Sz_s],z'\rira+\lela z_s,z'\rira\lela [w,Sz_s],z\rira\\
&=&-\lela [w,Sz],z'\rira-\lela [w,Sz'],z\rira=0,
\end{eqnarray*}
where the last equality holds because $S$ is Killing and thus satisfies  \eqref{eq.killtensor}.
Finally, consider $x\in\mv$ and $z\in\mz$, then
$$
\sum_{s=1}^n \Omega_{\xi_{z_s}\cdot\xi_{Sz_s}}(w)(x,z)=\sum_{s=1}^m\lela z_s,z\rira\lela Sz_s,x\rira=\lela Sx,z\rira,
$$ since $S$ is symmetric. Therefore, $S=\sum_{s=1}^m \Omega_{\xi_{z_s}\cdot \xi_{Sz_s}}$ as claimed.
\end{proof}

By Lemma \ref{lm.Senz} and Lemma \ref{lm.jv} we obtain directly the following:

\begin{pro}\label{pro.Senv}
Let $S\in\Sym^2\mn$ define a left-invariant symmetric Killing tensor, with components denoted by $S^\mv$, $S^{\rm m}$ and $S^\mz$ as in Definition \ref{def.m}. Then $S$ is decomposable if and only if $S^\mv$ is decomposable.
\end{pro}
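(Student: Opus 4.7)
The plan is to deduce the proposition directly from the two preceding lemmas, using the fact that the Killing condition is compatible with the decomposition \eqref{eq.decomvz}.

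First I would observe that because $S$ is a Killing tensor, Proposition \ref{pro.z} implies that each of the three components $S^\mv \in \Sym^2\mv$, $S^{\rm m} \in \mz\cdot\mv$, and $S^\mz \in \Sym^2\mz$ is individually a left-invariant symmetric Killing $2$-tensor on $N$. This is the key structural input: the splitting \eqref{eq.decomvz} respects the Killing condition, so the three pieces can be analyzed separately.

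Next I would invoke Lemma \ref{lm.Senz} to get that $S^\mz$ is decomposable and Lemma \ref{lm.jv} to get that $S^{\rm m}$ is decomposable. Both lemmas in fact exhibit explicit expressions of these components as linear combinations of symmetric products of Killing vector fields of the form $\xi_{z}\cdot\xi_{z'}$ or $\xi_{z}\cdot\xi_{Sz}$ (no parallel part is needed).

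Finally I would use that the set of decomposable tensors is, by definition, a linear subspace of $\Sym^2\mn$ (it is the span of parallel tensors together with symmetric products of Killing vector fields). Therefore the identity
\[
S^\mv = S - S^{\rm m} - S^\mz
\]
immediately gives the equivalence: if $S^\mv$ is decomposable, then $S = S^\mv + S^{\rm m} + S^\mz$ is a sum of decomposable tensors and hence decomposable; conversely, if $S$ is decomposable, then $S^\mv$, being the difference of $S$ and the already-decomposable tensors $S^{\rm m}$ and $S^\mz$, is decomposable as well.

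There is no real obstacle here; once Proposition \ref{pro.z} and the two lemmas are in hand, the proposition is essentially a one-line linearity argument. The substantive content sits entirely in Lemmas \ref{lm.Senz} and \ref{lm.jv}, which isolate the explicit Killing vector field constructions needed to decompose the central and mixed parts.
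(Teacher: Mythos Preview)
Your proposal is correct and follows exactly the paper's approach: the paper simply states that the proposition follows directly from Lemma~\ref{lm.Senz} and Lemma~\ref{lm.jv}, and your write-up just makes explicit the linearity argument and the appeal to Proposition~\ref{pro.z} that are implicit in that one-line deduction.
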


Our next aim is to show that if $\mn$ is a reducible 2-step nilpotent Lie algebra, then left-invariant symmetric Killing tensors on $N$ are determined by the ones on the factors.

Suppose $\mn=\mn_1\oplus\mn_2$ as an orthogonal direct sum of ideals. Denote $N,N_1,N_2$ the simply connected nilpotent Lie groups with Lie algebras $\mn,\mn_1,\mn_2$, respectively, and consider the left-invariant metrics $g_i$ that $\bil$ induces on $N_i$ for $i=1,2$ (also identified with the corresponding scalar products in $\mn_i$). The fact that  $(\mn,g)=(\mn_1,g_1)\oplus(\mn_2,g_2)$ implies that  $(N,g)=(N_1,g_1)\times (N_2,g_2)$ as Riemannian manifolds.

\begin{pro}\label{pro.sina}  Let $S\in \Sym^2\mn$ define a left-invariant symmetric Killing tensor on $N$. Let $S=S_1+S_{\rm nd}+S_2$ be its decomposition described in \eqref{eq.n12}. Then $S$ is  decomposable if and only if the left-invariant tensors defined by $S_i$ are decomposable symmetric Killing tensors on $N_i$ for $i=1,2$. 
\end{pro}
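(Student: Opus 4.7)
The \textbf{``if'' direction} is straightforward from the Riemannian product structure. If $S_i = P_i^* + \sum_l \xi^{(l)} \cdot \eta^{(l)}$ is a decomposition on $N_i$, each Killing vector field on $N_i$ lifts to a Killing vector field on $N = N_1 \times N_2$ (via the inclusion $\Iso(N_i) \hookrightarrow \Iso(N)$ acting trivially on the other factor), and $P_i^*$ lifts to a parallel tensor on $N$ through $\pi_i$; this provides a decomposition of each $S_i$ viewed as a left-invariant tensor on $N$. Together with the observation that $S_{\rm nd} \in \mz_1 \cdot \mz_2 \subset \Sym^2\mz$ by Proposition \ref{pro.idealsn}(2), hence decomposable by Lemma \ref{lm.Senz}, this yields a decomposition of $S = S_1 + S_{\rm nd} + S_2$ on $N$.

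For the \textbf{``only if'' direction}, assume $S = P + \sum_k \xi^{(k)} \cdot \eta^{(k)}$ on $N$, with $P$ parallel and each $\xi^{(k)} = \xi_{x^{(k)}} + \xi_{D^{(k)}}$ as in Wolf's theorem. Split $x^{(k)} = x_1^{(k)} + x_2^{(k)}$ and $D^{(k)} = D_1^{(k)} + D_2^{(k)} + D_{\rm mix}^{(k)}$ according to $\mn = \mn_1 \oplus \mn_2$, where $D_i^{(k)}$ is a skew-symmetric derivation of $\mn$ supported on $\mn_i$ and $D_{\rm mix}^{(k)}$ is the ``mixing'' part. A short computation using $[\mn_1, \mn_2] = 0$ and the derivation identity shows that $D_{\rm mix}^{(k)}(\mn_i) \subset \mz_j$ for $i \neq j$; in particular $D_{\rm mix}^{(k)}(w) \in \mz$ for every $w \in \mn$, so by \eqref{eq.xi}, $\Omega_{\xi_{D_{\rm mix}^{(k)}}}(w) = D_{\rm mix}^{(k)}(w)$. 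Correspondingly each Killing vector field splits as $\xi^{(k)} = \xi^{N_1,(k)} + \xi^{N_2,(k)} + \xi^{\rm mix,(k)}$, with $\xi^{N_i,(k)}$ the lift of a Killing vector field on $N_i$.

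At any point $p = \exp(w_1) \in N_1 \subset N$, a direct computation shows that $\xi^{N_1,(k)}_p \in T_p N_1$, while both $\xi^{N_2,(k)}_p$ and $\xi^{\rm mix,(k)}_p$ lie in $T_p N_2$ (the latter because the formula above with $w_2 = 0$ gives $D_{\rm mix}^{(k)}(w_1) \in \mz_2 \subset \mn_2$). Hence only the pure $N_1$-products $\xi^{N_1,(k)} \cdot \eta^{N_1,(k)}$ contribute to the $\Sym^2 TN_1$-projection at points of $N_1$; all other products lie in $TN_1 \cdot TN_2$ or in $\Sym^2 TN_2$. Moreover, since $N$ is a Riemannian product, the Levi-Civita connection respects the decomposition $TN = \pi_1^* TN_1 \oplus \pi_2^* TN_2$, and hence the $\Sym^2 TN_1$-component of $P$ is the pullback $\pi_1^* \tilde P_1$ of a parallel tensor $\tilde P_1$ on $N_1$. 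Restricting the identity $S = P + \sum_k \xi^{(k)} \cdot \eta^{(k)}$ to $N_1$ and projecting onto $\Sym^2 TN_1$ therefore yields
\[
S_1 = \tilde P_1 + \sum_k \bigl(\xi^{N_1,(k)}|_{N_1}\bigr) \cdot \bigl(\eta^{N_1,(k)}|_{N_1}\bigr),
\]
the required decomposition of $S_1$ on $N_1$; the decomposability of $S_2$ follows by exchanging the roles of the two factors. The principal technical point is the algebraic lemma forcing the mixing part of a skew-symmetric derivation of $\mn_1 \oplus \mn_2$ to land in the center of the opposite factor, which is exactly what makes the mixing Killing vector fields vertical to $N_1$ and allows the restriction-and-project operation to cleanly isolate the pure $N_1$-contributions.
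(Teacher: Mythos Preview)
Your proof is correct and reaches the same conclusion, but it takes a more explicit, structure-dependent route than the paper's argument.

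For the ``only if'' direction, the paper works at the level of general Riemannian products: given any Killing vector field $\xi$ on $N=N_1\times N_2$, its restriction to $N_1\times\{e\}$ followed by projection to $TN_1$ is automatically a Killing vector field $\xi^1$ on $N_1$; the paper then simply observes that $\pr_{\Sym^2\mn_1}(\Omega_{\xi\cdot\eta}|_{\mn_1})=\Omega_{\xi^1\cdot\eta^1}$ and applies $\pr_{\Sym^2\mn_1}(\,\cdot\,|_{\mn_1})$ to the whole decomposition of $S$. No appeal to Wolf's theorem or to the structure of $\Dera(\mn)$ is needed at this step.

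You instead invoke Wolf's decomposition of each Killing field as $\xi_{x}+\xi_{D}$, split $x$ and $D$ according to $\mn_1\oplus\mn_2$, and prove the algebraic lemma that the off-diagonal part $D_{\rm mix}$ of a skew-symmetric derivation of $\mn_1\oplus\mn_2$ maps $\mn_i$ into $\mz_j$ ($i\ne j$). This lemma is correct (it follows from $[\mn_1,\mn_2]=0$ and the derivation identity, as you indicate) and it is exactly what guarantees that, at points of $N_1\times\{e\}$, the ``mixing'' and ``$N_2$'' pieces of each Killing field lie in $TN_2$, so they disappear under the $\Sym^2 TN_1$-projection. In effect, your argument \emph{proves} the product-restriction fact that the paper takes for granted, using the specific nilpotent structure. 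The trade-off is that your proof is longer and tied to the 2-step nilpotent setting, while the paper's is shorter and would transfer verbatim to any Riemannian product; on the other hand, your approach yields as a byproduct the structural lemma on $D_{\rm mix}$, which is of some independent interest and resonates with the derivation-based viewpoint used elsewhere in the paper (e.g.\ in Theorem~\ref{pro.necT}).

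The ``if'' direction and the treatment of $S_{\rm nd}$ via Lemma~\ref{lm.Senz} are identical in both arguments.
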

\begin{proof}

Let $S\in\Sym^2\mn$ and write $S=S_1+S_{\rm nd}+S_2$ as in the hypothesis. Suppose that $S$ is a decomposable Killing tensor.  By Proposition \ref{pro.idealsn}, $S_1,S_2,S_{\rm nd}$ are Killing tensors and $S_{\rm nd}\in \Sym^2{\mz}$; in particular, $S_{\rm nd}$ is decomposable by Lemma \ref{lm.Senz}. 
The fact that $S$ is decomposable in $N$ implies 
\begin{equation}\label{eq.OS}
S=\sum_{l=1}^tT_l+\sum_{1\leq j,k\leq m} a_{j,k}\Omega_{\xi_j\cdot \xi_k},\end{equation}
where $\{\xi_j\}_{j=1}^m$ is a basis of Killing vector fields of $N$, $a_{j,k}\in \R$ for $1\leq j,k\leq m$, and $T_l$ are parallel symmetric tensors on $N$ for $l=1, \ldots, t$. 

For each $j=1, \ldots, m$ and $i=1,2$, the Killing vector field $\xi_j$  defines a Killing vector field $\xi_j^i$ on $(N_i,g_i)$,  by restricting $\xi_j$ to $N_i\times \{e\}$ and projecting it to $TN_i$. 
In addition, since $TN_i$ is a left-invariant distribution in $N$, we have 
that $\Omega_{\xi_j^i}:\mn_i\lra \mn_i$ is given by $\Omega_{\xi_j^i}=\pr_{\mn_i}(\Omega_{\xi_j}|_{\mn_i})$. Moreover,  $\pr_{\mn_i}(\Omega_{\xi_j}|_{\mn_i})\cdot \pr_{\mn_i}(\Omega_{\xi_k}|_{\mn_i})=\pr_{\Sym^2\mn_i}(\Omega_{\xi_j\cdot \xi_k}|_{\mn_i})$, for all $1\leq j,k\leq m$, where the last projection is with respect to the decomposition \eqref{eq.n12}. Therefore $\Omega_{ \xi_j^i\cdot \xi_k^i}=\pr_{\Sym^2\mn_i}(\Omega_{\xi_j\cdot \xi_l}|_{\mn_i})$. 
Similarly, $T_l$ defines parallel symmetric tensors $T_l^i$ on $N_i$, for $l=1, \ldots,t$ and $i=1,2$. Therefore, restricting and projecting over $N_i$, for $i=1,2$, in \eqref{eq.OS} we have
$$S_i=\pr_{\Sym^2\mn_i} (S|_{\mn_i})=\sum_lT^i_l+\sum_{j,k} a_{j,k} \Omega_{\xi_j^i\cdot \xi_k^i},
$$
 thus $S_i$ is decomposable in $\mn_i$, for $i=1,2$.

Conversely, fix $i\in \{1,2\}$ and suppose $S_i$ is a decomposable left-invariant symmetric Killing tensor on $N_i$. Then $S_i=\sum_{l=1}^{t_i}T_l+\sum_{1\leq j,k\leq m_i} a_{j,k} \Omega_{\zeta_j\cdot \zeta_k}$ where $T_l$ are  parallel symmetric tensors on $N_i$ for $l=1, \ldots, t_i$,  $a_{j,k}\in \R$ for $1\leq j,k\leq m_i$ and  $\{\zeta_j\}_{j=1}^{t_i}$ is a basis of Killing vector fields of $N_i$. It is easy to check that both $T_l$ and $\zeta_j$ extend to parallel tensors and Killing vector fields of $N$, respectively, so that $S_i$ is decomposable as  a symmetric tensor in $N$. Hence $S=S_1+S_{\rm nd}+S_2$ is decomposable in $N$ by Lemma \ref{lm.Senz}. 
\end{proof}

Any 2-step nilpotent metric Lie algebra can be written as an orthogonal direct sum of ideals $\mn=\ma\oplus\mn_1\oplus\cdots\oplus \mn_s$, where $\ma$ is the abelian ideal in Remark \ref{rem.j} \eqref{it.remj}, and each $\mn_i$ is an irreducible 2-step nilpotent metric Lie algebra. The proposition above shows that the study of (in)decomposable left-invariant symmetric Killing tensors defined by elements in  $\Sym^2\mn$ can be reduced to the study of these objects defined by elements in $\Sym^2\ma$ and in each $\Sym^2\mn_i$. Notice that any left-invariant symmetric tensor defined by an element in  $\Sym^2\ma$ is parallel, thus Killing and decomposable. Therefore, it is sufficient to study (in)decomposable left-invariant symmetric Killing tensors on de Rham irreducible nilpotent Lie groups. Moreover, by Proposition \ref{pro.Senv} we might reduce our study to the decomposability of left-invariant symmetric Killing tensors defined by elements in $\Sym^2\mv$.

\medskip

For the rest of this section we assume that $(\mn,g)$ is an irreducible 2-step nilpotent metric Lie algebra. In particular, by Remark \ref{rem.j} \eqref{it.remj}, the map $j:\mz\lra\so(\mv)$ is injective.

\begin{lm} \label{lm.jinyecderiv} Let $T:\mv\lra\mv$ be a skew-symmetric endomorphism. Then $T$ admits at most one extension to a skew-symmetric derivation of $\mn$.
\end{lm}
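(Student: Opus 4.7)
The plan is to prove uniqueness by considering the difference of two extensions and showing it must vanish, using the irreducibility assumption on $(\mn, g)$ which is in force throughout Section 5.

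Suppose $D_1$ and $D_2$ are two skew-symmetric derivations of $\mn$ extending $T$, meaning $D_i|_\mv = T$ for $i=1,2$. Set $D \ce D_1 - D_2$. Since the space of skew-symmetric derivations is a vector space, $D \in \Dera(\mn)$, and by construction $D|_\mv = 0$. I would like to show $D = 0$, i.e.\ that $D|_\mz = 0$ as well. Note that $D$ preserves $\mz$ (as every derivation does, since $[Dz,x] = D[z,x] - [z,Dx] = 0$ for $z \in \mz$, $x \in \mn$) and preserves $\mv = \mz^\perp$ (because $D$ is skew-symmetric).

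To control $D|_\mz$, I would invoke the identity \eqref{eq.deriv}, which holds for any skew-symmetric derivation: $j(Dz) = [D|_\mv, j(z)]$ for all $z \in \mz$. Since $D|_\mv = 0$, this forces $j(Dz) = 0$ for every $z \in \mz$. Now I would use the standing assumption that $(\mn,g)$ is irreducible: by Remark \ref{rem.j}\eqref{it.remj}, the map $j:\mz \lra \so(\mv)$ is injective. Therefore $Dz = 0$ for all $z \in \mz$, and combined with $D|_\mv = 0$ we conclude $D = 0$, i.e.\ $D_1 = D_2$.

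No step here looks like a serious obstacle; the argument is a short chain of implications relying on the characterization \eqref{eq.deriv} of skew-symmetric derivations and the irreducibility hypothesis. The only subtle point worth flagging explicitly is that the injectivity of $j$ is precisely what makes the argument go through, and this is where the irreducibility assumption of Section 5 is used in an essential way.
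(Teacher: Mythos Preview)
Your proof is correct and follows essentially the same approach as the paper: both use the identity \eqref{eq.deriv} together with the injectivity of $j$ (guaranteed by the standing irreducibility assumption) to pin down $D|_\mz$ uniquely. The paper's argument is just the one-line version of yours, observing directly that $j(Dz)=[T,j(z)]$ determines $Dz$ since $j$ is injective, rather than phrasing it via the difference of two extensions.
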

\begin{proof} If $D\in \Dera(\mn)$ extends $T$, then for each $z\in \mz$,  $Dz$ is uniquely determined by the condition $j(Dz)=[T,j(z)]$ (see  \eqref{eq.deriv}), as $j$ is injective. 
\end{proof}

\begin{pro}\label{pro.decompv}
Let $S\in \Sym^2\mv$ define a left-invariant symmetric Killing tensor. Denote by $\lambda_i\in \R$ $(i=1,\ldots,k)$ the eigenvalues of $S$ in $\mv$ and  by $\mv_i$ the corresponding eigenspaces, so that $\mv$ decomposes as an orthogonal direct sum of $\mv=\bigoplus_{i=1}^k\mv_i$. Then $[\mv_i,\mv_j]=0$ for all $i\neq j$ and $j(z)$ preserves $\mv_i$ for all $z\in \mz$, $i=1, \ldots, k$.
\end{pro}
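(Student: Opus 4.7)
The plan is to translate both conclusions into direct consequences of the Killing condition given by Proposition \ref{pro.caracterizKilling}, applied to the particular form $S \in \Sym^2\mv$.

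First I would observe that, because $S\in \Sym^2\mv$, we have $S|_\mz=0$, so the second condition in \eqref{eq.killtensor} is automatic. Thus the Killing condition reduces to the single identity
\[
[Sx,y]=[x,Sy]\qquad \text{for all } x,y\in\mv.
\]
Since $S$ is symmetric on $\mv$, its eigenvalues $\lambda_i$ are real and the eigenspaces $\mv_i$ are mutually orthogonal and span $\mv$, as stated.

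For the first conclusion, I would take $x\in\mv_i$ and $y\in\mv_j$ with $i\neq j$. Then the above identity yields
\[
\lambda_i[x,y]=[Sx,y]=[x,Sy]=\lambda_j[x,y],
\]
so $(\lambda_i-\lambda_j)[x,y]=0$, and since $\lambda_i\neq\lambda_j$ we obtain $[x,y]=0$. This proves $[\mv_i,\mv_j]=0$ for $i\neq j$.

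For the second conclusion, I would use the defining relation \eqref{eq:jota} of $j(z)$ together with the bracket vanishing just established. For $x\in\mv_i$, $y\in\mv_j$ with $i\neq j$, and any $z\in\mz$,
\[
\langle j(z)x,y\rangle = \langle z,[x,y]\rangle=0,
\]
so $j(z)x$ is orthogonal to every $\mv_j$ with $j\neq i$, hence lies in $\mv_i$. This shows $j(z)$ preserves each eigenspace $\mv_i$.

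There is no real obstacle here; everything follows from Proposition \ref{pro.caracterizKilling}, the symmetry of $S|_\mv$, and the definition of $j(z)$. The only point to state clearly is that $S\in \Sym^2\mv$ makes the second Killing condition trivial, so the entire proof rests on the single identity $[Sx,y]=[x,Sy]$ on $\mv$.
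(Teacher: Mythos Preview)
Your proof is correct and follows essentially the same approach as the paper: both use the first identity in \eqref{eq.killtensor} applied to eigenvectors from distinct eigenspaces to obtain $(\lambda_i-\lambda_j)[x,y]=0$, and then deduce the $j(z)$-invariance of each $\mv_i$ from \eqref{eq:jota}. Your version is slightly more explicit in noting why the second Killing condition is vacuous for $S\in\Sym^2\mv$ and in spelling out the orthogonality argument for $j(z)$, but the logic is identical.
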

\begin{proof}
Let $x\in\mv_i$, $y\in\mv_j$ be eigenvectors of $S$ associated to the eigenvalues $\lambda_i\neq \lambda_j$. Then by \eqref{eq.killtensor}, $[Sx,y]=[x,Sy]$, which leads to $\lambda_i[x,y]=\lambda_j[x,y]$. Therefore $[\mv_i,\mv_j]=0$. This implies, by \eqref{eq:jota} that $j(z)$ preserves each $\mv_i$ for every $z\in\mz$.
\end{proof}

Before studying the general case, let us notice that the case where $S\in \Sym^2\mv$ has a unique eigenvalue is trivial:

\begin{pro} The restriction $g|_\mv$ of the metric $g$ to $\mv$ defines left-invariant decomposable symmetric Killing tensor on $N$. 
\end{pro}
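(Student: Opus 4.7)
The plan is to observe that $g|_\mv$ is simply the difference between the full metric $g$ (which is parallel, hence trivially decomposable) and the restriction $g|_\mz$ of the metric to the centre (which is decomposable by Lemma \ref{lm.Senz}).

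First I would confirm that $g|_\mv$ defines a left-invariant symmetric Killing $2$-tensor. Viewed as a symmetric endomorphism of $\mn$ via the decomposition $\mn = \mv\oplus\mz$, the element $g|_\mv\in\Sym^2\mv$ corresponds to the orthogonal projection onto $\mv$, i.e. the identity on $\mv$ extended by zero on $\mz$. This is precisely the situation of Example \ref{ex.v} with $\lambda=1$, so the Killing system \eqref{eq.killtensor} is satisfied trivially: $[Sx,y]=[x,y]=[x,Sy]$ for $x,y\in\mv$, and $\ad_x\circ S|_\mz=0$ for every $x\in\mv$.

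For decomposability, I would use the orthogonal splitting $g=g|_\mv+g|_\mz$ in $\Sym^2\mn$, rewritten as
\[
g|_\mv \;=\; g \;-\; g|_\mz .
\]
The metric $g$ is Levi-Civita parallel, so it is a parallel symmetric tensor and thus decomposable by definition. On the other hand $g|_\mz$ belongs to $\Sym^2\mz$, so by Lemma \ref{lm.Senz} it is a linear combination of symmetric products of Killing vector fields; explicitly, if $\{z_1,\ldots,z_m\}$ is an orthonormal basis of $\mz$, then $g|_\mz=\tfrac12\sum_j z_j\cdot z_j=\tfrac12\sum_j \Omega_{\xi_{z_j}\cdot \xi_{z_j}}$, using that central elements give left-invariant Killing vector fields $\xi_{z_j}$ by \eqref{eq.xi}. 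Subtracting gives the desired decomposition of $g|_\mv$ as a parallel tensor plus a combination of symmetric products of Killing vector fields.

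There is no real obstacle here: the statement is essentially an immediate corollary of Lemma \ref{lm.Senz} combined with the parallelism of the metric. The only minor point to verify is that the orthogonality of $\mv$ and $\mz$ with respect to $g$ indeed yields the splitting $g=g|_\mv+g|_\mz$ in $\Sym^2\mn$, which is automatic.
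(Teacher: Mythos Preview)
Your proof is correct and follows essentially the same argument as the paper: write $g|_\mv = g - g|_\mz$, observe that $g$ is parallel, and express $g|_\mz=\tfrac12\sum_s \xi_{z_s}\cdot\xi_{z_s}$ using an orthonormal basis of $\mz$. The paper is slightly more terse and does not explicitly invoke Example~\ref{ex.v} or Lemma~\ref{lm.Senz}, but the content is the same.
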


\begin{proof} Let $\{e_1, \ldots, e_m\}$ and $\{z_1,\ldots,z_n\}$ be  orthonormal basis of $\mv$ and $\mz$, respectively. Then
$$ g|_\mv= \frac12\sum_{i=1}^{m} e_i\cdot e_i =\bil - \frac{1}2  \sum_{s=1}^n z_s\cdot z_s=\bil - \frac{1}2  \sum_{s=1}^n\xi_{ z_s}\cdot \xi_{z_s},$$
where $\xi_{z_s}$ denotes the Killing vector field defined by $z_s$ on $N$ (which is both left- and right-invariant). Therefore, $g|_\mv$ defines a decomposable Killing tensor.
\end{proof}

\noindent {\bf Notation.} Let $S\in\Sym^2\mv$ define a left-invariant symmetric Killing tensor on $N$ and let $\mv=\mv_1\oplus \cdots\oplus\mv_k$ be the decomposition of $\mv$ as in Proposition \ref{pro.decompv}. Denote $\pr_{\mv_i} :\mv\lra \mv$ the orthogonal projection onto $\mv_i$.

For $\lambda_1, \cdots, \lambda_k\in \R$ denote 
$\pmb{\lambda}=(\lambda_1, \ldots,\lambda_k)$. Given $a\in \R$, we define $$\pmb\lambda-a :=(\lambda_1-a, \ldots, \lambda_k-a).$$
Let $\Pi^{\pmb\lambda}$ be the endomorphism of $\mv$ defined as $\Pi^{\pmb\lambda}=\sum_{i=1}^k\lambda_i \pr_{\mv_i}$. For each $z\in \mz$ we shall denote by $T_z^{\pmb\lambda}$ the skew-symmetric endomorphism of $\mv$ given by
$$T^{\pmb\lambda}_z=j(z)\circ\Pi^{\pmb\lambda}.$$

We are now ready for the main result of this section:

\begin{teo} \label{pro.necT} Let $\mn=\mv\oplus\mz$ be an irreducible $2$-step nilpotent Lie algebra. Let $S\in Sym^2\mv$ define a left-invariant  symmetric Killing tensor. Denote the eigenvalues of $S$ (viewed as endomorphism of $\mv$) by $\lambda_1, \ldots, \lambda_k$. Then $S$ is decomposable if and only if there exists $a\in\R$ such that for every $z\in\mz$, $T^{\pmb\lambda-a}_z$ extends to a skew-symmetric derivation of $\mn$.
\end{teo}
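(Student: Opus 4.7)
The plan is to prove both implications by exploiting the explicit formulas \eqref{eq.symmprodxy}--\eqref{eq.symmprodxD} for the $\Omega$'s attached to symmetric products of Killing vector fields, the decomposition $\Sym^2\mn = \Sym^2\mv \oplus (\mv\cdot\mz) \oplus \Sym^2\mz$ from \eqref{eq.decomvz}, and the preceding Lemmas~\ref{lm.Senz} and~\ref{lm.jv}, which handle the decomposability of Killing tensors in $\Sym^2\mz$ and $\mv\cdot\mz$.

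A preliminary reduction: since $g|_\mv$ is decomposable by the proposition immediately preceding the theorem, for any $a \in \R$ the decomposability of $S$ is equivalent to that of $\tilde S := S - a g|_\mv$, whose eigenvalues on the same eigenspaces $\mv_i$ are $\lambda_i - a$. Note that $T_z^{\pmb\lambda - a} = j(z) \circ \tilde S|_\mv$. This shift is what allows each direction of the theorem to be handled with an appropriate choice of $a$.

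For the sufficient direction, fix the $a$ given by the hypothesis and let $D_s \in \Dera(\mn)$ be the unique (by Lemma~\ref{lm.jinyecderiv}) extension of $T_{z_s}^{\pmb\lambda - a}$ for each $z_s$ in an orthonormal basis $\{z_s\}$ of $\mz$. I would write down a specific linear combination involving the products $\Omega_{\xi_{e_i}\cdot\xi_{e_j}}$, $\Omega_{\xi_{e_i}\cdot\xi_{D_s}}$, $\Omega_{\xi_{z_s}\cdot\xi_{D_s}}$, and $\Omega_{\xi_{D_s}\cdot\xi_{D_{s'}}}$, designed so that its $\Sym^2\mv$-projection is a constant function of $w$ equal to $\tilde S$. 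The essential cancellation of the linear and quadratic (in $w$) $\Sym^2\mv$-contributions will rely on the identity $j(D_s z') = [D_s|_\mv, j(z')]$ coming from \eqref{eq.deriv}. The leftover $\mv\cdot\mz$ and $\Sym^2\mz$ components of the combination form a Killing tensor in those subspaces, hence decomposable by Lemmas~\ref{lm.jv} and~\ref{lm.Senz}; this yields the decomposability of $\tilde S$ and hence of $S = \tilde S + a g|_\mv$.

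For the necessary direction, a decomposition $S = ag + \sum_i \alpha_i \Omega_{X_i\cdot Y_i}$ (with $X_i, Y_i$ Killing vector fields) is assumed, and the value of $a$ appearing here will be the one witnessed in the conclusion. I would substitute the explicit formulas \eqref{eq.symmprodxy}--\eqref{eq.symmprodxD} and analyze the resulting polynomial identity in $w$ component by component in the splitting $\Sym^2\mn = \Sym^2\mv \oplus (\mv\cdot\mz) \oplus \Sym^2\mz$. Isolating the linear-in-$w_\mz$ contribution to the $\mv\cdot\mz$ projection (which, among the three families of products, comes only from the $\xi_x\cdot\xi_D$ family), together with the vanishing of other mixed terms, produces for each $z \in \mz$ an equation that identifies $j(z)(S|_\mv - a\,\mathrm{Id}_\mv) = T_z^{\pmb\lambda - a}$ with the $\mv$-restriction of a skew-symmetric derivation $D_z \in \Dera(\mn)$ assembled from the derivations appearing in the sum. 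By Lemma~\ref{lm.jinyecderiv} this derivation is unique, hence equals the sought extension. The principal obstacle throughout is the systematic bookkeeping of contributions at each polynomial order in $w_\mv$ and $w_\mz$ across the three components of $\Sym^2\mn$, combined with the repeated use of \eqref{eq.deriv} to relate the restrictions of the derivations to $\mv$ and to $\mz$.
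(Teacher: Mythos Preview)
Your overall architecture matches the paper's, but there are two concrete problems, one in each direction.

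\textbf{Necessary direction.} After writing $S=ag+\text{(products of Killing fields)}$, the paper evaluates at $w=0$ to pin down the scalar coefficients, then looks at the \emph{degree-one-in-$w$} part of the resulting polynomial identity and contracts it with a basis element $z_r\in\mz$ \emph{for $w\in\mv$}. This yields directly $D'_r|_\mv=T_{z_r}^{\pmb\lambda-a}$. Your plan instead isolates the ``linear-in-$w_\mz$'' contribution to the $\mv\cdot\mz$ component. But for $w\in\mz$ the term $x\cdot Dw$ lands in $\mv\cdot\mz$ only when $x\in\mv$, and then records $D|_\mz$, not $D|_\mv$. Since knowing $D|_\mz$ does \emph{not} determine $D|_\mv$ (Lemma~\ref{lm.jinyecderiv} goes the other way), this extraction does not by itself exhibit $T_z^{\pmb\lambda-a}$ as the $\mv$-restriction of a derivation. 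The correct filtration is linear-in-$w_\mv$, contracted against $\mz$.

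\textbf{Sufficient direction.} Your plan is to build a combination $K$ of products of Killing fields whose $\Sym^2\mv$-projection is constant equal to $\tilde S$, and then invoke Lemmas~\ref{lm.Senz} and~\ref{lm.jv} on the leftover $K-\tilde S$. But those lemmas apply only to \emph{left-invariant} tensors; the leftover $K-\tilde S$ is a priori a $w$-dependent Killing tensor with values in $(\mv\cdot\mz)\oplus\Sym^2\mz$, so the lemmas do not apply. The paper closes this gap by showing $K=S$ \emph{exactly}: the degree-one and degree-two terms in $w$ cancel. The key step you do not mention is the identity $D_zz'=-D_{z'}z$ for the extensions $D_z$ of $T_z^{\pmb\lambda-a}$, proved from \eqref{eq.deriv} together with the injectivity of $j$ (irreducibility); this is what kills the $\Sym^2\mz$-valued degree-one term $\sum_s z_s\cdot D'_s w$ for $w\in\mz$. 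Without that, you cannot conclude the leftover is constant and your appeal to the lemmas breaks down. (Incidentally, the paper's combination uses only $\xi_{e_l}\cdot\xi_{e_l}$, $\xi_{z_s}\cdot\xi_{z_s}$ and $\xi_{D'_s}\cdot\xi_{z_s}$; the families $\xi_{e_i}\cdot\xi_{D_s}$ and $\xi_{D_s}\cdot\xi_{D_{s'}}$ are not needed.)
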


\begin{proof} 
Let $\{D_1,\ldots ,D_d\}$ be a basis of $\Dera(\mn)$ and let $\{z_1, \ldots, z_n\}$ be an orthonormal basis of $\mz$. Let $\{e_1, \ldots, e_m\}$ be an orthonormal basis of $\mv$ adapted to the decomposition $\mv=\mv_1\oplus \cdots\oplus\mv_k$ of eigenspaces of $S$, in the sense that there exist $m_0, m_1,\ldots,m_k\in \N$ such that $e_{m_{i-1}+1},\ldots ,e_{m_{i}}$ spans $\mv_i$ for every $i=1, \ldots,k$; in particular, $m_0=0$ and $m_k=m$. By \eqref{eq.S} we then have $S=\frac{1}2\sum_{i=1}^{k}\sum_{l=m_{i-1}+1}^{m_{i}}\lambda_i\  e_l\cdot e_l$. 

By Remark \ref{rem.multmetr}, if $S$ is \nameend, then there exist constants  $a,\ a_{i,l},\ b_{i,t},\ c_{s,t},\ d_{q,r}\in\R$ and $D'_s, D''_i\in \Dera(\mn)$ for $i,l\in\{1,\ldots,m\}$, $s,t\in\{1,\ldots,n\}$, and $q,r\in\{1,\ldots,d\}$ with $a_{i,l}=a_{l,i}$, $c_{s,t}=c_{t,s}$ and $d_{q,r}=d_{r,q}$, such that
\begin{eqnarray}
S&=& a \bil +\sum_{1\leq i, l\leq m} a_{i,l}\Omega_{ \xi_{e_i}\cdot\xi_{e_l}}+\sum_{\begin{smallmatrix}1\leq i\leq m\nonumber\\
1\leq t\leq n
\end{smallmatrix}} b_{i,t} \Omega_{\xi_{e_i}\cdot\xi_{z_t}}+\sum_{1\leq s,t\leq n} c_{s,t}\Omega_{\xi_{z_s}\cdot\xi_{z_t}}
\\
&&+\sum_{s=1}^n \Omega_{\xi_{D'_s}\cdot\xi_{z_s}} 
+\sum_{i=1}^{m} \Omega_{\xi_{D''_i}\cdot\xi_{e_i}} +\sum_{1\leq q,r\leq d}d_{q,r} \Omega_{\xi_{D_q}\cdot\xi_{D_r}}. \label{eq.combkill}
\end{eqnarray}

Evaluating this equation at $w=0$ and using \eqref{eq.xi} we obtain $b_{i,t}=0$ for all $i,t$, $c_{s,t}=0$ for $s\ne t$, $a_{i,l}=0$ for $i\ne l$, $c_{s,s}=-\frac{a}2$, and $a_{l,l}=\frac12 \lambda_i-\frac{a}2$ for each $i\in\{1,\ldots,k\}$ and $m_{i-1}+1\leq l\leq m_i$. Consequently, \eqref{eq.combkill} becomes
\begin{eqnarray*}
S&=&ag + \sum_{i=1}^k\sum_{l=m_{i-1}+1}^{m_i}
\frac12(\lambda_i-a) \Omega_{\xi_{e_l}\cdot\xi_{e_l}}-\frac{a}2\sum_{s=1}^{n} \Omega_{\xi_{z_s}\cdot\xi_{z_s}}+\sum_{s=1}^n \Omega_{\xi_{D'_s}\cdot\xi_{z_s}} 
+\sum_{i=1}^{m} \Omega_{\xi_{D''_i}\cdot\xi_{e_i}} \\
&&\quad +\sum_{1\leq q,r\leq d}d_{q,r} \Omega_{\xi_{D_q}\cdot\xi_{D_r}}.
\end{eqnarray*}
We replace by the formulas in \eqref{eq.symmprodxy}--\eqref{eq.symmprodxD} and we obtain, for all $w\in \mn$, 
\begin{eqnarray*}
S&=&\frac12\sum_{i=1}^k\sum_{l=m_{i-1}+1}^{m_i}
\lambda_ie_l\cdot e_l
+\sum_{i=1}^k\sum_{l=m_{i-1}+1}^{m_i}\frac12(\lambda_i-a)
 \left( -2\ e_l\cdot[w,e_l]+[w,e_l]^2\right) \\
&&+ \sum_{s=1}^n \left(z_s\cdot D'_sw -\frac12 z_s\cdot [w,D'_sw]\right)+\sum_{1\leq q,r\leq d}d_{q,r} \Omega_{\xi_{D_q}\cdot\xi_{D_r}} \\
&&+ \sum_{i=1}^{m} e_i\cdot D''_iw -([w,e_i]\cdot D''_iw+\frac12 e_i\cdot [w,D''_iw])+\frac12 [w,e_i]\cdot [w,D''_iw]
\end{eqnarray*}
The right hand side is polynomial in the coordinates of $w$ (expressed in the above basis of $\mn$), whereas the left hand side is constant. The equality above holds if and only if the coefficients in each degree coincide. Equality in degree one holds if and only if
$$\sum_{s=1}^n  z_s\cdot D'_sw + \sum_{i=1}^{m} e_i\cdot D''_iw=\sum_{i=1}^k\sum_{l=m_{i-1}+1}^{m_i}(\lambda_i-a) e_l\cdot[w,e_l].$$ 
Contracting this equality with $z_{r}$ for some fixed $r$ in $\{1,\ldots, n\}$ gives for any $w\in\mv$:
\begin{eqnarray*}
D'_{r}w &=&\sum_{i=1}^k\sum_{l=m_{i-1}+1}^{m_i}(\lambda_i-a)   \lela [w,e_l],z_{r}\rira e_l=\sum_{i=1}^k\sum_{l=m_{i-1}+1}^{m_i}(\lambda_i-a)   \lela j(z_{r})w,e_l\rira e_l\\
&=& j(z_r)\sum_{i=1}^k(\lambda_i-a) \pr_{\mv_i}w=T_{z_r}^{\pmb\lambda-a}w.
\end{eqnarray*} 
Therefore, $D_r'$ extends $T_{z_r}^{\pmb\lambda-a}$ to a skew-symmetric derivation of $\mn$. Since the map $z\mapsto T^{\pmb\lambda-a}_z$ is linear, we obtain that $T^{\pmb\lambda-a}_z$ extends to a skew-symmetric derivation for all $z\in \mz$.

\smallskip

In order to prove the converse statement, let $S\in \Sym^2\mv$ define a left-invariant symmetric Killing tensor with eigenvalues $\lambda_1,\ldots,\lambda_k$ and let $a\in \R$ be such that for every $z\in\mz$, $T_z^{\pmb\lambda-a}$ extends to a derivation $D_z$ of $\mn$, where $\pmb\lambda=(\lambda_1,\ldots,\lambda_k)$. We shall explicit a linear combination of the metric and products of Killing vector fields as in \eqref{eq.combkill} that gives $S$.

We claim that \eqref{eq.combkill} is satisfied for the following choice of coefficients and derivations: $a$ is the one coming from the hypothesis, $b_{s,t}=0$ for all $s,t$, $c_{s,t}=0$ for $s\neq t$,  $c_{s,s}=-\frac{a}2$ for $s=1,\ldots n$, $a_{i,l}=0$ for $i\neq l$ and $\frac{a}2+a_{l,l}=\frac12 \lambda_i$ for each $i=1,\ldots,k$ and $m_{i-1}+1\leq l\leq m_i$, $d_{q,r}=0$ for all $q,r$, and $D''_i=0$ for all $i$, and for $s=1,\ldots, n$, $D'_s$ is the skew-symmetric derivation extending $T_{z_s}^{\pmb\lambda-a}$. With these choices, we define the decomposable Killing tensor
\begin{eqnarray}\label{eq.converse}
K&:=&ag + \sum_{i=1}^k\sum_{l=m_{i-1}+1}^{m_i}
\frac12(\lambda_i-a) \Omega_{\xi_{e_l}\cdot\xi_{e_l}}-\frac{a}2\sum_{s=1}^{n} \Omega_{\xi_{z_s}\cdot\xi_{z_s}}+\sum_{s=1}^n \Omega_{\xi_{D'_s},\xi_{z_s}}=\nonumber \\
&=&\frac12\sum_{i=1}^k\sum_{l=m_{i-1}+1}^{m_i}
\lambda_ie_l\cdot e_l
+\sum_{i=1}^k\sum_{l=m_{i-1}+1}^{m_i}\frac12(\lambda_i-a)
 \left( -2\ e_l\cdot[w,e_l]+[w,e_l]^2\right) \\ 
&&+ \sum_{s=1}^n \left(z_s\cdot D'_sw -\frac12 z_s\cdot [w,D'_sw]\right),\nonumber 
\end{eqnarray}
where the last equality follows again from \eqref{eq.symmprodxy}--\eqref{eq.symmprodxD}, and we will show that actually $K=S$. 

A reasoning as before gives $\sum_{i=1}^k\sum_{l=m_{i-1}+1}^{m_i}(\lambda_i-a) e_l\cdot[w,e_l] = \sum_{s=1}^n z_s\cdot D'_sw $ for all $w\in\mv$. We claim that this also holds for $w\in\mz$. Since the left hand side vanishes in this case, we need to show that $\sum_{s=1}^n z_s\cdot D'_sw=0$ for all $w\in\mz$.

For every $z,z' \in\mz$, $j(D_zz')$, $j(D_{z'}z)$ are both in $\so(\mv)$, they preserve each $\mv_i$ and actually they are opposite in each $\mv_i$ since by \eqref{eq.deriv}:
$$j(D_zz')|_{\mv_i}=(\lambda_i-a)[j(z)|_{\mv_i},j(z')|_{\mv_i}]=-(\lambda_i-a)[j(z')|_{\mv_i},j(z)|_{\mv_i}]=-j(D_{z'}z)|_{\mv_i}.$$ 
Therefore $j(D_zz')=-j(D_{z'}z)$ and thus $D_zz'=-D_{z'}z$ since $j$ is injective. 

Now given $w\in \mz$, 
\begin{eqnarray*}
\sum_{s=1}^n (z_s\cdot D'_sw)(z,z')&=&\sum_{s=1}^n \lela z_s,z\rira\lela D_{z_s}w,z'\rira+ \lela z_s,z'\rira\lela D_{z_s}w,z\rira\\
&=&-\sum_{s=1}^n \lela z_s,z\rira\lela D_w z_s,z'\rira+ \lela z_s,z'\rira\lela D_w z_s,z\rira\\
&=&-\lela D_w z,z'\rira- \lela D_w z',z\rira=0, 
\end{eqnarray*} for all $z,z'\in\mz$, thus proving our claim. From \eqref{eq.converse} we thus get
\begin{equation}\label{eq.ks}K=S+\sum_{i=1}^k\sum_{l=m_{i-1}+1}^{m_i}\frac12(\lambda_i-a)[w,e_l]^2 -\frac12\sum_{s=1}^n  z_s\cdot [w,D'_sw].\end{equation}

Finally, for every $w\in\mv$ we have:
\begin{eqnarray*}\sum_{s=1}^n  z_s\cdot [w,D'_sw]&=&\sum_{s=1}^n\sum_{i=1}^k\sum_{l=m_{i-1}+1}^{m_i}  z_s\cdot [w,e_l]\ g(e_l,D'_sw)\\ 
&=&-\sum_{s=1}^n\sum_{i=1}^k\sum_{l=m_{i-1}+1}^{m_i}  z_s\cdot [w,e_l]\ g(T_{z_s}^{\pmb\lambda-a}e_l,w)\\
&=&-\sum_{s=1}^n\sum_{i=1}^k\sum_{l=m_{i-1}+1}^{m_i}(\lambda_i-a)  z_s\cdot [w,e_l]\ g(j(z_s)e_l,w)\\
&=&-\sum_{s=1}^n\sum_{i=1}^k\sum_{l=m_{i-1}+1}^{m_i}  (\lambda_i-a) z_s\cdot [w,e_l]\ g(z_s,[e_l,w])\\&=&\sum_{i=1}^k\sum_{l=m_{i-1}+1}^{m_i}  (\lambda_i-a)  [w,e_l]^2,
 \end{eqnarray*}
which together with \eqref{eq.ks} shows that $S=K$, so $S$ is decomposable.
\end{proof}

For the statement of the next result we introduce some terminology. We say that an orthogonal decomposition $\mv=\mv_1\oplus\ldots\oplus\mv_k$ is $j(\mz)$-invariant if it is preserved by $j(z)$ for every $z\in\mz$. Equivalently, such a decomposition is $j(\mz)$-invariant if $[\mv_i,\mv_j]=0$ for all $i\ne j$.
In this case, we denote by $j(\mz)|_{\mv_i}$ the vector subspace of $\so(\mv_i)$ 
$$j(\mz)|_{\mv_i}:=\{j(z)|_{\mv_i}:\mv_i\lra \mv_i\ |\ z\in\mz\}.$$

As a first application of Theorem \ref{pro.necT}, we describe Lie groups for which every left-invariant symmetric Killing $2$-tensor is decomposable.

\begin{cor}\label{pro.condz} Let $\mn$ be an irreducible $2$-step nilpotent Lie algebra such that one of the following conditions holds:
\begin{itemize}
\item $\dim \mz=1$;
\item $\dim \mz\geq 2$ and for any $j(\mz)$-invariant orthogonal decomposition $\mv=\mv_1\oplus\ldots\oplus\mv_k$, one has that $j(\mz)|_{\mv_i}$ is an abelian subalgebra of $\so(\mv_i)$ for all $i=1,\ldots,k$ except possibly for one index $i_0$.
\end{itemize}
Then any left-invariant symmetric Killing $2$-tensor on $N$ is decomposable.
\end{cor}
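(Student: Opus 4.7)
The plan is to apply Theorem \ref{pro.necT} directly. By Proposition \ref{pro.Senv}, a left-invariant symmetric Killing tensor $S$ is decomposable if and only if its component $S^\mv\in\Sym^2\mv$ is, so I may assume from the outset that $S\in\Sym^2\mv$. Let $\lambda_1,\ldots,\lambda_k$ denote its eigenvalues and $\mv=\mv_1\oplus\cdots\oplus\mv_k$ the associated orthogonal eigenspace decomposition; this is $j(\mz)$-invariant by Proposition \ref{pro.decompv}. Theorem \ref{pro.necT} then reduces the question to producing a single $a\in\R$ such that $T_z^{\pmb\lambda-a}=j(z)\circ\Pi^{\pmb\lambda-a}$ extends to a skew-symmetric derivation of $\mn$ for every $z\in\mz$.

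Since $j(z)$ preserves each $\mv_i$ and $\Pi^{\pmb\lambda-a}$ acts as the scalar $\lambda_i-a$ on $\mv_i$, the two operators commute, and $T_z^{\pmb\lambda-a}$ is skew-symmetric on $\mv$ and preserves each $\mv_i$. Using the characterization of skew-symmetric derivations given by equation \eqref{eq.deriv}, $T_z^{\pmb\lambda-a}$ extends to a skew-symmetric derivation $D$ of $\mn$ if and only if $[T_z^{\pmb\lambda-a},j(z')]\in j(\mz)$ for every $z'\in\mz$; the value $Dz'$ is then uniquely determined via the injectivity of $j$ supplied by Remark \ref{rem.j}\eqref{it.remj}. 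A short computation, using that $j(z)$, $j(z')$ and $\Pi^{\pmb\lambda-a}$ all preserve the decomposition, gives
\begin{equation*}
\bigl[T_z^{\pmb\lambda-a},\,j(z')\bigr]\big|_{\mv_i}=(\lambda_i-a)\bigl[j(z)|_{\mv_i},\,j(z')|_{\mv_i}\bigr].
\end{equation*}

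It then remains to choose $a$ so that this vanishes for every $z,z'\in\mz$ and every $i$. If $\dim\mz=1$, then on each $\mv_i$ both $j(z)|_{\mv_i}$ and $j(z')|_{\mv_i}$ are proportional to a fixed operator, so the commutator is already zero and any choice of $a$ works (say $a=0$). If $\dim\mz\geq 2$, the second hypothesis applied to the $j(\mz)$-invariant decomposition $\mv=\mv_1\oplus\cdots\oplus\mv_k$ yields at most one index $i_0$ for which $j(\mz)|_{\mv_{i_0}}$ is non-abelian; set $a=\lambda_{i_0}$ (or $a=0$ if no such $i_0$ exists). Then on $\mv_i$ with $i\neq i_0$ the bracket $[j(z)|_{\mv_i},j(z')|_{\mv_i}]$ vanishes by abelianness, while on $\mv_{i_0}$ the scalar factor $\lambda_{i_0}-a$ is zero. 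Hence $[T_z^{\pmb\lambda-a},j(z')]=0\in j(\mz)$ for all $z,z'$, and $T_z^{\pmb\lambda-a}$ extends to the skew-symmetric derivation of $\mn$ that is zero on $\mz$. Theorem \ref{pro.necT} then yields that $S$ is decomposable.

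The main technical step is the commutator computation on each eigenblock; the conceptual point is that the $j(\mz)$-invariance of the $S$-eigenspace decomposition supplied by Proposition \ref{pro.decompv} lets the eigenvalue shift $a=\lambda_{i_0}$ annihilate the single possibly non-abelian block, while the abelian hypothesis handles all the others. The one subtlety to watch is that the commutator must vanish outright in $\so(\mv)$ (not merely modulo $j(\mz)$) in order to define an extension with $D|_\mz=0$; this is precisely what the argument delivers.
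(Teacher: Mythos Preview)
Your proof is correct and follows essentially the same route as the paper: reduce to $S\in\Sym^2\mv$ via Proposition \ref{pro.Senv}, use the $j(\mz)$-invariant eigenspace decomposition from Proposition \ref{pro.decompv}, compute $[T_z^{\pmb\lambda-a},j(z')]|_{\mv_i}=(\lambda_i-a)[j(z)|_{\mv_i},j(z')|_{\mv_i}]$, and then choose $a$ (arbitrary when $\dim\mz=1$, equal to $\lambda_{i_0}$ otherwise) so that this commutator vanishes on every block, giving the extension by zero on $\mz$ and allowing Theorem \ref{pro.necT} to conclude. The paper presents the same argument, defining $D_z$ directly as the extension by zero and verifying \eqref{eq.deriv} blockwise.
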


\begin{proof} 
In view of Proposition \ref{pro.Senv} it is enough to show that any 
left-invariant symmetric Killing tensor defined by al element $\Sym^2\mv$ is decomposable. Let $S\in \Sym^2\mv$  and let $\mv=\mv_1\oplus\ldots\oplus\mv_k$ be the orthogonal decomposition in the eigenspaces of $S$ in $\mv$. We apply the converse of Theorem \ref{pro.necT} to show that $S$ is decomposable.

If $\dim \mz=1$, then $j(\mz)$ is an abelian subalgebra of $\so(\mv)$. Choose $a\in \R$ arbitrary and let $D_z$ be the endomorphism of $\mn$ which extends $T_z^{\pmb\lambda-a}$ by zero, for $z\in \mz$. Then, for all $z'\in\mz$,
$j(D_zz')=0$ by definition, and 
$$[D_z|_\mv,j(z')]=[T_z^{\pmb\lambda-a},j(z')]=(\lambda-a)[j(z),j(z')]=0$$
because $\mz$ is of dimension 1. Hence \eqref{eq.deriv} holds and thus $D_z$ is a skew-symmetric derivation of $\mn$ which extends $T_z^{\pmb\lambda-a}$.

For the second case, up to reordering the indexes, one can assume that $j(\mz)|_{\mv_i}$ is an abelian subalgebra of $\so(\mv_i)$ for all $i=2,\ldots,k$. For each $z\in \mz$, let $D_z$ be the extension by zero of $T_z^{\pmb\lambda-\lambda_1}$. We claim that $D_z$ is a derivation of $\mn$ for all $z$. Indeed, $[T_z^{\pmb\lambda-\lambda_1},j(z')]|_{\mv_1}=0$, and for $i\geq 2$ we have $$[T_z^{\pmb\lambda-\lambda_1},j(z')]|_{\mv_i}=(\lambda_i-\lambda_1)[j(z)|_{\mv_i},j(z')|_{\mv_i}]=0$$ since 
$j(\mz)|_{\mv_i}$ is an abelian subalgebra of $\so(\mv_i)$. Therefore $D_z$ satisfies \eqref{eq.deriv}, so it is a derivation of $\mn$. 

In both cases we thus obtain that $S$ is decomposable by Theorem  \ref{pro.necT}.
\end{proof}

\begin{ex}
Every left-invariant symmetric Killing 2-tensor on the Heisenberg Lie group in Example \ref{ex.heis} is decomposable. Indeed, the center of $\mh_{2n+1}$ is one dimensional.
\end{ex}

\begin{teo}\label{teo.dimleq7} Every left-invariant symmetric Killing $2$-tensor on a $2$-step nilpotent Lie group of dimension $\leq 7$ is decomposable.
\end{teo}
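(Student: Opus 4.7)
The plan is to reduce to the irreducible case and then apply Corollary \ref{pro.condz} after a short dimension count. Following the reduction described in the paragraph after Proposition \ref{pro.sina}, write $\mn = \ma \oplus \mn_1 \oplus \cdots \oplus \mn_s$ with $\ma$ abelian and each $\mn_i$ irreducible 2-step nilpotent. Tensors on $\Sym^2\ma$ are parallel, hence decomposable, and by Proposition \ref{pro.sina} the decomposability of $S$ reduces to the decomposability of its components on each $\Sym^2\mn_i$. Since $\dim\mn_i\le\dim\mn\le 7$, I may assume from the start that $(\mn,g)$ is irreducible of dimension $\le 7$. By Proposition \ref{pro.Senv}, it further suffices to treat $S\in\Sym^2\mv$ defining a left-invariant Killing tensor.

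Next I would verify the hypotheses of Corollary \ref{pro.condz}. If $\dim\mz=1$, the first bullet applies directly. So suppose $\dim\mz\ge 2$. By Remark \ref{rem.j}\eqref{it.remj}, irreducibility of $\mn$ forces $j:\mz\to\so(\mv)$ to be injective, hence $\dim\so(\mv)\ge 2$, which gives $\dim\mv\ge 3$. Combined with $\dim\mn=\dim\mv+\dim\mz\le 7$, this confines us to $\dim\mv\in\{3,4,5\}$.

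The decisive elementary observation is the following: for any orthogonal decomposition $\mv=\mv_1\oplus\cdots\oplus\mv_k$, the bound $\sum_i\dim\mv_i=\dim\mv\le 5$ implies that at most one summand can have $\dim\mv_i\ge 3$. For every other summand $\dim\mv_i\le 2$, so $\so(\mv_i)$ is at most one-dimensional and thus abelian, which makes $j(\mz)|_{\mv_i}$ automatically an abelian subalgebra of $\so(\mv_i)$. Applied to any $j(\mz)$-invariant orthogonal decomposition of $\mv$, this shows that $j(\mz)|_{\mv_i}$ fails to be abelian for at most one index $i_0$, which is precisely the hypothesis of the second bullet of Corollary \ref{pro.condz}. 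The corollary then yields that $S$ is decomposable, completing the proof.

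I do not anticipate a serious obstacle: the whole argument is an assembly of already-established results, with the only substantive input being the pigeonhole-type bound on the number of high-dimensional pieces in an orthogonal decomposition of $\mv$. The only place where one might be tempted to look harder is in checking that $\dim\mv\ge 3$ whenever $\dim\mz\ge 2$ in the irreducible setting, but this follows at once from the injectivity of $j$ noted in Remark \ref{rem.j}\eqref{it.remj}.
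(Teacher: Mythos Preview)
Your argument is correct and follows essentially the same route as the paper: reduce to the irreducible case via Proposition~\ref{pro.sina}, then invoke Corollary~\ref{pro.condz} after the dimension count showing that $\dim\mv\le 5$ forces at most one block in any orthogonal decomposition of $\mv$ to have $\dim\mv_i\ge 3$ (equivalently, at most one block can carry a non-abelian $j(\mz)|_{\mv_i}$). The paper phrases this contrapositively (an indecomposable tensor would require two blocks of dimension $\ge 3$, hence $\dim\mn\ge 2+3+3=8$), but the content is identical; your extra remarks invoking Proposition~\ref{pro.Senv} and the bound $\dim\mv\ge 3$ are harmless but not needed, since Corollary~\ref{pro.condz} already covers all left-invariant symmetric Killing $2$-tensors and only the upper bound $\dim\mv\le 5$ is used.
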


\begin{proof} Let $N$ be a 2-step nilpotent Lie group and suppose there is some left-invariant symmetric Killing $2$-tensor on $N$ which is indecomposable. By Proposition \ref{pro.sina} we can assume that its Lie algebra $\mn$ is irreducible. Corollary \ref{pro.condz} implies that $\dim \mz\geq 2$ and there exists a decomposition $\mv=\mv_1\oplus\cdots\oplus\mv_k$ in $j(z)$ invariant subspaces such that for at least two indices $i_0$, $i_1$, the vector subspaces $j(\mz)|_{\mv_{i_0}}$ and $j(\mz)|_{\mv_{i_1}}$ are not an abelian subalgebras of $\so(\mv_{i_0})$ and $\so(\mv_{i_1})$ respectively. This, in particular, implies that $\dim \mv_{i_0}$ and $\dim \mv_{i_1}$ are both $\geq3$ and therefore $\dim \mn\geq \dim(\mz\oplus\mv_{i_0}\oplus\mv_{i_1})\geq 8$. 
\end{proof}

A second application of Theorem \ref{pro.necT} is a method to construct 2-step nilpotent Lie groups admitting left-invariant indecomposable symmetric Killing $2$-tensors. 

\begin{cor} \label{cor.ind} Let $S\in\Sym^2\mv$ define a left-invariant symmetric Killing tensor and consider the decomposition of $\mv$ as orthogonal direct sum of eigenspaces of $S$, $\mv=\bigoplus_{i=1}^k\mv_i$. If there exist $i\neq j$ such that $j(\mz)|_{\mv_i}$ and $j(\mz)|_{\mv_j}$ are not subalgebras of $\so(\mv_i)$ and $\so(\mv_j)$, respectively, then $S$ is indecomposable.
\end{cor}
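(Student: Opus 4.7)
The plan is to argue by contradiction via Theorem \ref{pro.necT}. Suppose $S$ is decomposable; then there exists $a\in\R$ such that for every $z\in\mz$, the map $T_z^{\pmb\lambda-a}=j(z)\circ\Pi^{\pmb\lambda-a}$ extends to a skew-symmetric derivation $D_z$ of $\mn$. The idea is to show that this forces $j(\mz)|_{\mv_i}$ to be a Lie subalgebra of $\so(\mv_i)$ for every eigenvalue index $i$ with $\lambda_i\neq a$, and then use the fact that $\lambda_i\neq\lambda_j$ to derive a contradiction with the hypothesis.

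To carry this out, I would first observe that by Proposition \ref{pro.decompv} each $j(z)$ preserves the decomposition $\mv=\bigoplus_i\mv_i$, so $T_z^{\pmb\lambda-a}|_{\mv_i}=(\lambda_i-a)\,j(z)|_{\mv_i}$. The derivation identity \eqref{eq.deriv} applied to $D_z$ gives, for every $z'\in\mz$,
\[
j(D_zz')=[D_z|_\mv,j(z')]=[T_z^{\pmb\lambda-a},j(z')].
\]
Restricting this equation to $\mv_i$ yields
\[
j(D_zz')|_{\mv_i}=(\lambda_i-a)\,[\,j(z)|_{\mv_i},\,j(z')|_{\mv_i}\,].
\]
Since $D_z$ is a derivation, $D_zz'\in\mz$, so the left-hand side belongs to $j(\mz)|_{\mv_i}$.

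Consequently, whenever $\lambda_i\neq a$, one can divide by $\lambda_i-a$ and conclude that $[j(z)|_{\mv_i},j(z')|_{\mv_i}]\in j(\mz)|_{\mv_i}$ for all $z,z'\in\mz$, i.e.\ $j(\mz)|_{\mv_i}$ is a subalgebra of $\so(\mv_i)$. Since by assumption this fails simultaneously for two distinct indices $i\neq j$, the scalar $a$ would have to satisfy $a=\lambda_i$ and $a=\lambda_j$; but $\lambda_i\neq \lambda_j$ as they correspond to different eigenspaces of $S$, a contradiction.

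The only delicate point is recognizing that the single real parameter $a$ produced by Theorem \ref{pro.necT} can neutralize the derivation constraint on at most one eigenspace, which is exactly why the hypothesis involves two ``bad'' indices rather than one; the rest of the argument is a direct unpacking of the derivation identity \eqref{eq.deriv}.
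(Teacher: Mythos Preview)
Your proof is correct and follows essentially the same approach as the paper: argue by contradiction, invoke Theorem \ref{pro.necT} to obtain $a\in\R$ and derivations $D_z$, apply the derivation identity \eqref{eq.deriv} restricted to each $\mv_i$, and conclude that $j(\mz)|_{\mv_i}$ is a subalgebra whenever $\lambda_i\neq a$. The paper phrases the endgame slightly differently (it simply picks the index among $i,j$ with $\lambda_i\neq a$ and derives the contradiction directly), but the logic is identical.
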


\begin{proof} Suppose that $S$ is decomposable and denote by $\pmb\lambda$ the vector of distinct eigenvalues of $S$ viewed as endomorphism of $\mv$ and by $\mv=\mv_1\oplus\cdots\oplus\mv_k$ the corresponding decomposition in eigenspaces of $S$. By Theorem \ref{pro.necT} there exists $a\in \R$ such that for all $z\in \mz$,  $T_{z}^{\pmb\lambda-a}$ extends to a skew-symmetric derivation of $\mn$. Since $i\neq j$, the eigenvalues $\lambda_i$ and $\lambda_j$ are distinct, so we may assume that $(\lambda_{i}-a)\neq 0$ (otherwise just replace $i$ by $j$).

Given $z\in \mz$ denote $D_z$ the skew-symmetric derivation of $\mn$ extending $T_{z}^{\pmb\lambda-a}$. Then,  \eqref{eq.deriv} gives
\begin{equation}\label{eq.jtau}
j(D_z z') = [T_{z}^{\pmb\lambda-a},j(z')], \mbox{ for all } z,z'\in \mz.
\end{equation}
Restricting to $\mv_{i}$, this equality gives $j(D_z z')|_{\mv_{i}} = (\lambda_{i}-a)[j(z)|_{\mv_{i}},j(z')|_{\mv_{i}}]$ which implies that $j(\mz)|_{\mv_{i}}$ is a Lie subalgebra of $\so(\mv_{i})$, since $\lambda_{i}-a\neq 0$. This contradicts the hypothesis, thus showing that $S$ is indecomposable.
\end{proof}

Next we present the aforementioned construction method. Let $(V,\bil)$ be an inner product space and consider a vector subspace $\mz\subset \so(V,\bil)$ which is not a Lie subalgebra of $\so(V,\bil)$. Set $\mv_1=\mv_2:=V$, $\mn:=\mv_1\oplus\mv_2\oplus\mz$ and define an inner product on $\mn$ making $\mv_1$, $\mv_2$, $\mz$ all orthogonal and extending $\bil$ in $\mv_1=\mv_2=V$ (the choice is arbitrary in $\mz$). On $\mn$ we define the Lie bracket such that
$$[\mv_1\oplus\mv_2,\mz]=0, \quad [\mv_1,\mv_2]=0, \quad \lela [x_i,y_i], z\rira=\lela z(x_i),y_i\rira, \mbox{ for }x_i,y_i\in \mv_i. $$
Then the map $j:\mz\lra \so(\mv_1\oplus \mv_2)$ is given by $j(z)=\begin{pmatrix} z&0\\0&z\end{pmatrix}$.

Fix $\alpha\neq 1$ and consider the endomorphism $S={\rm Id}|_{\mv_1}+ \alpha {\rm Id}|_{\mv_2}$ of $\mv$. By Proposition \ref{pro.caracterizKilling}, $S$ defines a left-invariant symmetric Killing $2$-tensor. Moreover, since $j(\mz)|_{\mv_1}$ and $j(\mz)|_{\mv_2}$ (which are both isomorphic to $\mz$) are not Lie subalgebras of $\so(\mv_1)=\so(\mv_2)$, Corollary \ref{cor.ind} shows that $S$ is indecomposable. 

The following is an explicit example of this construction.

\begin{ex} Let $\mn$ be the 2-step nilpotent Lie algebra of dimension 8 with orthonormal basis $e_1, \ldots, e_6, z_1,z_2$ satisfying the bracket relations
$$[e_1,e_2]=z_1=[e_4,e_5], \qquad [e_2,e_3]=z_2=[e_5,e_6].$$ 
Consider $S\in \Sym^2\mn$ defined by $Se_i=e_i$ for $i=1, 2,3$, $Se_i=2e_i$ for $i=4,5,6$, and $Sz_1=Sz_2=0$. Since $S$ verifies \eqref{eq.killtensor} it defines a left-invariant Killing tensor. The decomposition of $\mv$ is given by $\mv_1={\rm span}\{e_1,e_2,e_3\}$ and $\mv_2={\rm span}\{e_3,e_4,e_6\}$. For both $i=1,2$, $j(\mz)|_{\mv_i}$, in the corresponding basis, is spanned by the matrices
$$\left(\begin{array}{ccc}
0 &-1&0\\
1&0&0\\
0&0&0
\end{array} \right),\quad \left(\begin{array}{ccc}
0 &0&0\\
0&0&-1\\
0&1&0
\end{array} \right).$$
The vector space spanned by these two matrices is clearly not a Lie subalgebra of $\so(3)$, so the left-invariant symmetric tensor defined by $S$ is indecomposable.
\end{ex}

\begin{remark} The above examples of indecomposable symmetric Killing tensors can be used to produce examples on compact nilmanifolds.

Recall that a nilpotent Lie group $N$ admits a co-compact discrete subgroup $\Gamma$ if and only if its Lie algebra $\mn$ is rational, that is, $\mn$ admits a basis with structure constants in $\Q$ (cf. \cite{RA}). In this case, any left-invariant Riemannian metric on $N$ defines a Riemannian metric on the compact manifold $\Gamma\bs N$ so that the natural projection $p:N\lra \Gamma\bs N$ is a locally isometric covering. 

Every left-invariant symmetric Killing tensor field $S$ on $N$ projects to a Killing tensor field $\bar S$ on $\Gamma\bs N$ and clearly, if $S$ is indecomposable on $N$, then $\bar S$ is indecomposable on $\Gamma\bs N$.
\end{remark}

\appendix

\section{Parallel distributions on nilpotent Lie groups}

In this section we will prove a property of parallel tensors on nilpotent Lie groups which was used in Theorem \ref{pro.necT}, but which, we think, is also of independent interest.

\begin{teo}\label{teo.parallel}
Every parallel symmetric $2$-tensor on a $2$-step nilpotent Lie group with left-invariant Riemannian metric $(N,\bil)$ is left-invariant.
\end{teo}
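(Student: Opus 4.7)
My plan is to reduce the theorem to an algebraic commutation relation in the left-invariant trivialization. Viewing a parallel symmetric $2$-tensor $S$ as a symmetric section of $\End(TN)$, I identify it with a smooth function $\hat S\colon N\to\Sym(\mn)$ via left-trivialization, so that $\hat S(e)=S_e$. For each $X\in\mn$ I define $\Gamma_X\in\End(\mn)$ by $\Gamma_XY:=\nabla_XY$, using the explicit formulas \eqref{eq:nabla}; the left-invariance of $g$ together with Koszul's formula forces $\Gamma_X\in\so(\mn)$. Expanding $\nabla_{\tilde X}S$ in the left-invariant frame (in which the connection coefficients are constant), the parallel condition $\nabla_{\tilde X}S=0$ translates into the matrix ODE $\tilde X(\hat S)=[\hat S,\Gamma_X]$ for every $X\in\mn$.

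Integrating this ODE along the one-parameter subgroup $t\mapsto\exp(tX)$ yields $\hat S(\exp(tX))=e^{-t\Gamma_X}\hat S(e)\,e^{t\Gamma_X}$. Since $\exp\colon\mn\to N$ is a diffeomorphism, $\hat S$ is determined globally by $\hat S(e)$. Consequently, left-invariance of $S$ is equivalent to $\hat S$ being constant, which amounts to the algebraic condition $[\hat S(e),\Gamma_X]=0$ for every $X\in\mn$.

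The main obstacle is establishing this commutation purely from the parallel hypothesis. My approach is via the de Rham decomposition theorem applied to the complete simply connected Riemannian manifold $(N,g)$: it splits uniquely as a Riemannian product of irreducible factors. For $2$-step nilpotent Lie groups with left-invariant metric, I claim this splitting arises from an orthogonal direct sum decomposition $\mn=\mn_1\oplus\cdots\oplus\mn_k$ into ideals, each factor being a $2$-step nilpotent Lie subgroup endowed with the induced left-invariant metric; the compatibility between the Riemannian and the Lie-algebraic decompositions is the delicate point, and is proved by analyzing how parallel distributions on $N$ interact with the $j$-map on $\mn$. Once this is available, a standard Schur-type argument on each irreducible factor shows that any parallel symmetric $2$-tensor restricts there to a scalar multiple of the metric, and such tensors are manifestly left-invariant. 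Summing contributions over the factors, $S$ is left-invariant on $N$.
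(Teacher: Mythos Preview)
Your ODE reformulation and the Schur step on irreducible de Rham factors are both correct, but the argument has a genuine gap exactly at what you call the ``delicate point'': proving that the de Rham splitting of $(N,g)$ comes from an orthogonal decomposition of $\mn$ into ideals. Your proposed method---``analyzing how parallel distributions on $N$ interact with the $j$-map''---is essentially the computation in the paper's Proposition~\ref{pro.paralleltensors}, but that computation requires the distribution to be \emph{left-invariant} from the start (one uses that the left-invariant extension of $x\in D_e\cap\mv$ is a section of $D$, so that $\nabla_z x=-\tfrac12 j(z)x$ lies in $D$). Without left-invariance of the de Rham distributions you cannot run this, and left-invariance of those distributions is exactly a special case of the theorem you are proving (the orthogonal projector onto a de Rham factor is a parallel symmetric endomorphism). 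So as written the argument is circular.

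The paper closes this gap by a different device that you are missing. It first observes (via the sign of the Ricci tensor on $\mv$ and $\mz$ when $\mn'=\mz$) that there is no flat factor, and then shows that \emph{every} parallel distribution $D$ satisfies $D_e=\bigoplus_{i\in I}T_i$ for some subset $I$ of the de Rham summands; hence there are only finitely many parallel distributions. Now for any parallel symmetric $S$ and any $h\in N$, the eigendistributions of $(L_h)_*S$ are again parallel, so $h\mapsto (L_h)_*V_i$ is a continuous map from the connected group $N$ into this finite set, hence constant. This yields $(L_h)_*S=S$ directly, with no need to know beforehand that the $T_i$ are ideals. Only afterwards does the paper deduce (Proposition~\ref{pro.paralleltensors}) that the eigenspaces of a parallel left-invariant tensor are ideals. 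If you want to salvage your outline, you need to insert precisely this finiteness-plus-continuity argument (or an equivalent one) before invoking Schur; the $j$-map analysis alone will not do it.
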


\begin{proof} Since the Lie algebra $\mn$ is $2$-step nilpotent, we have $[[\mn,\mn],\mn]=0$, so the derived algebra $\mn':=[\mn,\mn]$ is contained in the center $\mz$ of $\mn$. Consider first the case where $\mn'=\mz$. It is easy to check that the
Ricci tensor of $(N,\bil)$ at the identity preserves the decomposition
$\mn=\mv\oplus\mz$ (where we recall that $\mv=\mz^\perp$) and is positive definite on $\mz$ and negative definite on $\mv$ \cite[Proposition 2.5]{EB}.
In particular $(N,\bil)$ carries no parallel vector fields, so its factors 
$(N_i,g_i)$, $i=1,\ldots,k$ in the de Rham decomposition are non-flat irreducible
Riemannian manifolds. Correspondingly, we write $\mn=T_e N=T_1\oplus\ldots\oplus
T_k$.

Let $D$ be any parallel distribution on $N$. We claim that $D_e$ (which of course
determines $D$) is the direct sum of some of the subspaces $T_i$. To see this,
consider the holonomy group $\mathrm{Hol}_e(N)\subset \mathfrak{so}(\mn)$. By the de
Rham theorem, $\mathrm{Hol}_e(N)=H_1\times\ldots\times H_k$ where $H_i$ acts
irreducibly on $T_i$ for each $i=1,\ldots,k$. Since $D$ is parallel, $D_e$ is a
subspace of $T_1\oplus\ldots\oplus T_k$ invariant by $H_1\times\ldots\times H_k$.
The orthogonal projection denoted by $D_i$  of $D$ onto $T_i$ obviously contains
$D_e\cap T_i$ and both are $H_i$-invariant subspaces of $T_i$. Thus $D_i=0$ or
$D_i=T_i$ for every $i$. Since 
$$\bigoplus_i(D_e\cap T_i)\subset D_e\subset \bigoplus_i D_i,$$
to prove our claim, we need to show that $D_e\cap T_j=D_j$ for every $j$. By the
irreducibility of $T_j$ as $H_j$-representation, it is enough to show that if
$D_j\ne 0$ then $D_e\cap T_j\ne 0$.

Let us fix any $j\in\{1,\ldots,k\}$ with $D_j\ne 0$, and take any non-zero $x_j\in
D_j$. Then there exist $x_i\in D_i$ for every $i\ne j$ such that
$x:=x_1+\ldots+x_k\in D$. Every element $h_j\in H_j$ acts trivially on $T_i$ for
$i\ne j$, so we get that $D_e\ni
h_j(x)=x_1+\ldots+x_{j-1}+h_j(x_j)+x_{j+1}+\ldots+x_k$, and thus
$x_j-h_j(x_j)=x-h_j(x)\in D_e$. 

Since $H_j$ is not trivial and $T_j$ is an irreducible $H_j$-representation, there
exists $h_j\in H_j$ such that $h_j(x_j)\ne x_j$. The non-zero vector $x_j-h_j(x_j)$
belongs to $D_e\cap T_j$, thus proving our claim.

In particular, this proves that the Riemannian manifold $(N,\bil)$ carries only a
finite number of parallel distributions. Assume now that $K$ is a parallel symmetric
2-tensor on $N$. If $\lambda_1,\ldots,\lambda_l$ denote the spectrum of $K_e$, the
corresponding eigenspaces define parallel distributions $V_1,\ldots,V_l$ on $N$ such
that $K=\sum_{i=1}^l \lambda_i \mathrm{Id}_{V_i}$. Let us fix $i\in\{1,\ldots,l\}$.
For every element $h\in N$, $h_*(V_i)$ belongs to a finite set of parallel
distributions, so by continuity, as $N$ is connected, $h_*(V_i)=V_i$ is independent
on $h$. This shows that $h_*K=K$, so $K$ is left-invariant.

For the general case, one defines as in Remark \ref{rem.j} \eqref{it.remj} the abelian ideal
$\ma$ of $\mn$ by $\ma:=\mz\cap\mn'^{\perp}$ and observe that $\tilde
\mn:=\mv\oplus\mn'$ is also an ideal and $\mn=\tilde \mn\oplus\ma$ is an orthogonal
direct sum of ideals. Correspondingly, the simply connected Lie group $N$ endowed
with the left-invariant metric $\bil$ is a Riemannian product $(\tilde N,\tilde\bil)\times
\mathbb{R}^n$, where $n=\mathrm{dim}(\ma)$ and $\tilde\bil$ is the restriction of
$\bil$ to $\tilde\mn$. By construction, the derived algebra $\tilde \mn'$ is equal
to the center of $\tilde\mn$, so by the first part of the proof, every parallel
symmetric tensor on $\tilde N$ is left-invariant. Moreover, since $(\tilde
N,\tilde\bil)$ has no parallel vector field, it follows that every parallel
symmetric tensor on $(\tilde N,\tilde\bil)\times \mathbb{R}^n$ is the sum of a
constant symmetric tensor on $\mathbb{R}^n$ and a parallel symmetric tensor on
$\tilde N$, and thus is left-invariant. This concludes the proof.
\end{proof}

The hypothesis that $N$ is 2-step nilpotent is essential in the above theorem, as
shown by the following (counter-)example.

\begin{ex}Assume that $G$ is the simply connected Lie group generated by the
solvable $3$-dimensional Lie algebra $\mg$ defined by the brackets
$$[e_1,e_2]=0,\qquad [e_2,e_3]=e_1,\qquad [e_3,e_1]=e_2.$$
Consider the left-invariant metric on $G$ defined by requiring that $e_1,e_2,e_3$ is
orthonormal and denote by $E_i$ the left-invariant vector field on $G$ which is
equal to $e_i$ at the identity. The Koszul formula \eqref{eq.Koszul} immediately shows that the
Levi-Civita connection $\nabla$ of this metric is given by
$$\nabla_{E_1}E_1=\nabla_{E_2}E_2=\nabla_{E_3}E_3=\nabla_{E_1}E_2=\nabla_{E_2}E_1=\nabla_{E_1}E_3=\nabla_{E_2}E_3=0,$$
$$ \nabla_{E_3}E_1=E_2,\qquad \nabla_{E_3}E_2=-E_1.$$
A straightforward computation then shows that the Riemannian curvature of $\nabla$
vanishes, so every symmetric tensor on $\mg$ defines a parallel tensor on $G$ by
parallel transport. On the other hand, the left-invariant tensor $E_1\cdot E_1$ is
not parallel (since $\nabla_{E_3}(E_1\cdot E_1)=2E_1\cdot E_2$), so it does not
coincide with the parallel tensor on $G$ defined by the parallel transport of
$e_1\cdot e_1$.\end{ex}

We will now show that the eigendistributions of a parallel symmetric endomorphism of $TN$ (which are automatically left-invariant by Theorem \ref{teo.parallel}) define a decomposition of $\mn$ as orthogonal sum of ideals.

\begin{pro} \label{pro.paralleltensors} Let $S\in \Sym^2\mn$ define a left-invariant symmetric tensor on $N$. Denote by $\lambda_i\in \R$
$(i=1,\ldots, k)$ the eigenvalues of $S$ and by $\mn_i$ the corresponding
eigenspaces, so that $\mn$ decomposes in an orthogonal direct sum of
$\mn=\bigoplus_{i=1}^k\mn_i$. 
Then $\nabla S=0$ if and only if each $\mn_i$ is an ideal on $\mn$.
\end{pro}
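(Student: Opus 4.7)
The statement is an ``if and only if''; the two directions are handled separately. For the \emph{``if'' direction}, suppose each $\mn_i$ is an ideal. Since the $\mn_i$ are eigenspaces of the self-adjoint $S$ for distinct eigenvalues, they are pairwise orthogonal, so $\mn_i \cap \mn_j = 0$ for $i \neq j$. Combined with the ideal property, this forces $[\mn_i, \mn_j] \subset \mn_i \cap \mn_j = 0$, so $\mn = \bigoplus_i \mn_i$ is a direct sum of ideals and $(N,g)$ decomposes as a Riemannian product $N_1 \times \cdots \times N_k$. The orthogonal projectors $\pi_i : TN \to TN_i$ are parallel in this product, hence $S = \sum_i \lambda_i \pi_i$ is also parallel.

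For the \emph{``only if'' direction}, since $S$ is left-invariant, $\nabla S = 0$ is equivalent to the commutation $[\nabla_x, S] = 0$ as endomorphisms of $\mn$, for every $x \in \mn$, where $\nabla_x$ is given by \eqref{eq:nabla}. The crux is to show first that $S$ preserves the decomposition $\mn = \mv \oplus \mz$. Taking $x, y \in \mz$ and writing $Sy = u + w$ with $u \in \mv$ and $w \in \mz$, the identity $\nabla_x(Sy) = S(\nabla_x y) = 0$ together with \eqref{eq:nabla} reduces to $j(x) u = 0$ for every $x \in \mz$. Any $v \in \mv$ annihilated by all $j(x)$ satisfies $[v, \mv] = 0$ and hence belongs to $\mv \cap \mz = 0$; thus $u = 0$ and $Sy \in \mz$, so $S \mz \subset \mz$ and by self-adjointness $S \mv \subset \mv$. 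Each eigenspace therefore splits as $\mn_i = \mv_i \oplus \mz_i$ with $\mv_i := \mn_i \cap \mv$ and $\mz_i := \mn_i \cap \mz$.

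With this preservation in hand, the commutation $[\nabla_x, S] = 0$ applied to $x, y \in \mv$ yields $[x, Sy] = S[x, y]$ (using $Sy \in \mv$); swapping $x$ and $y$ and using antisymmetry of the bracket likewise gives $[Sx, y] = S[x, y]$, so $[Sx, y] = [x, Sy]$. For $x \in \mv_i$ and $y \in \mv_j$ this becomes $(\lambda_i - \lambda_j)[x, y] = 0$, forcing $[\mv_i, \mv_j] = 0$ whenever $i \neq j$, while $S[x, y] = \lambda_i [x, y]$ for $x, y \in \mv_i$ shows $[\mv_i, \mv_i] \subset \mz_i$. Since $[\mn, \mz] = 0$, these relations combine to give $[\mn, \mn_i] \subset \mn_i$, so each $\mn_i$ is an ideal. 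The main obstacle is the preservation step: without it the bracket identities would not align with the eigenspace decomposition, and the argument rests on the specific fact that the family $\{ j(z) : z \in \mz \}$ has trivial common kernel on $\mv$, a feature of the 2-step nilpotent metric structure.
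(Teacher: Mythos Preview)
Your proof is correct. The ``if'' direction matches the paper's argument (orthogonal ideals give a Riemannian product, hence parallel projectors). For the ``only if'' direction you take a genuinely different, and in fact slightly slicker, route than the paper.

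The paper proves the stronger statement that \emph{any} left-invariant parallel distribution $D\subset\mn$ is an ideal. To show $D=(D\cap\mv)\oplus(D\cap\mz)$ it picks $x$ in the $\mv$-projection of $D$ orthogonal to $D\cap\mv$, applies $\nabla_z$ once to get $j(z)x\in D\cap\mv$, then a \emph{second} time to get $j(z)^2x\perp x$, hence $j(z)x=0$ and $x=0$. Two differentiations are needed because the information available is only a containment $D\ni\cdots$.

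You instead exploit the equation $[\nabla_x,S]=0$. Applied with $x,y\in\mz$ this is an \emph{identity}, not a containment, and a single use of \eqref{eq:nabla} already forces $j(x)(\pr_\mv Sy)=0$ for all $x\in\mz$, hence $S\mz\subset\mz$. Both arguments ultimately rest on the same fact---that $\bigcap_{z\in\mz}\ker j(z)\cap\mv=0$ because such a vector would be central---but yours reaches it in one step. The cost is that your argument is tailored to symmetric endomorphisms and does not immediately yield the paper's more general claim about arbitrary parallel distributions; the benefit is a shorter, more transparent computation for the proposition as stated.
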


\begin{proof} The ``if'' part is obvious, since if each $\mn_i$ is an ideal on
$\mn$, then $\mathrm{Id}_{\mn_i}$ is parallel as endomorphism of $\mn$, and thus so
is $S=\sum_{i=1}^k\lambda_i\mathrm{Id}_{\mn_i}$.

Conversely, if $S$ is parallel, then each $\mn_i$ defines a left-invariant parallel
distribution on $N$, so it is enough to show that if the left-invariant distribution on $N$ defined by a vector subspace $D\subset \mn$ is parallel, then $D$ is an ideal of $\mn$. 

Denote by $D^\mv\supset D\cap \mv$ the orthogonal projection of $D$ onto $\mv$. We
claim that $D^\mv= D\cap\mv$. Let $x\in D^\mv$ be some arbitrary element orthogonal
to $D\cap \mv$. Then there exists $z_0\in\mz$ with $x+z_0\in D$. For every $z\in\mz$
we have $D\ni \nabla_z(x+z_0)=-\frac12j(z)(x)$ (see \eqref{eq:nabla}), whence $j(z)(x)\in D\cap \mv$ for
every $z\in\mz$. Moreover, $\nabla_z(j(z)(x))=-\frac12j(z)^2(x)$ clearly belongs to
$D$ (as $D$ is parallel) and to $\mv$, so in particular, by the choice of $x$, it is
orthogonal to $x$. Since $j(z)$ is skew-symmetric, this shows that $j(z)(x)=0$ for
every $z\in\mz$, whence $x=0$, thus proving our claim. 

This shows that $D=(D\cap\mv)\oplus(D\cap\mz)$. In order to prove that $D$ is an
ideal it is now equivalent to check that for every $x\in D\cap \mv$ and $y\in \mv$
one has $[x,y]\in D$ (all other commutators are automatically $0$). This is clear
since $[x,y]=-2\nabla_y x\in D$ as $D$ is parallel.
\end{proof}

As a corollary, we get the following result about $2$-step nilpotent Lie algebras, which is also of independent interest.

\begin{cor} Let $(\mn,g)$ be a $2$-step nilpotent metric Lie algebra. Then there exist {\em irreducible} $2$-step nilpotent metric Lie algebras $(\mn_i,g_i)$ $i\in\{1,\ldots,k\}$ (unique up to reordering) such that 
$$(\mn,g)=\bigoplus_{i=1}^k(\mn_i,g_i)\oplus (\ma,g_0),$$
for some abelian metric Lie algebra $(\ma,g_0)$.
\end{cor}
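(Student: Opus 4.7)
The plan is to leverage the de Rham decomposition of the simply connected Riemannian manifold $(N,g)$ together with the rigidity results for parallel symmetric tensors established in Theorem \ref{teo.parallel} and Proposition \ref{pro.paralleltensors}.

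The first step is to isolate the abelian factor exactly as in the second half of the proof of Theorem \ref{teo.parallel}: set $\ma:=\mz\cap\mn'^\perp$ and $\tilde\mn:=\mv\oplus\mn'$. These are orthogonal ideals with $\mn=\tilde\mn\oplus\ma$, and the derived algebra of $\tilde\mn$ coincides with its center. Consequently the Ricci tensor of $(\tilde N,\tilde g)$ is definite (with opposite signs) on each of $\tilde\mv$ and $\tilde\mn'$, so $(\tilde N,\tilde g)$ admits no parallel vector field. Applying the de Rham decomposition to $(\tilde N,\tilde g)$ produces non-flat irreducible factors $(\tilde N_i,\tilde g_i)$, $i=1,\ldots,k$, whose tangent spaces at the identity $T_i\subset\tilde\mn$ are parallel distributions. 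Choose pairwise distinct $\lambda_i\in\R$ and consider the parallel symmetric tensor $S=\sum_i \lambda_i\,\mathrm{Id}_{T_i}$; by Theorem \ref{teo.parallel} it is left-invariant, and by Proposition \ref{pro.paralleltensors} each $T_i$ is an ideal of $\tilde\mn$. Setting $\mn_i:=T_i$ gives the orthogonal decomposition $\tilde\mn=\bigoplus_{i=1}^k\mn_i$ into ideals.

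Each $(\mn_i,g_i)$ is $2$-step nilpotent as an ideal of $\tilde\mn$, and it is non-abelian, for otherwise $\tilde N_i$ would be a flat Euclidean factor. To verify irreducibility of $(\mn_i,g_i)$ as a metric Lie algebra, suppose $\mn_i=\mn_i'\oplus\mn_i''$ is an orthogonal direct sum of non-zero ideals of $\mn_i$. Since $[\mn_i,\mn_j]=0$ for $j\ne i$, the subspaces $\mn_i'$ and $\mn_i''$ are in fact ideals of $\tilde\mn$; applying Proposition \ref{pro.paralleltensors} to the refined ideal decomposition of $\tilde\mn$ gives a finer parallel distribution on $\tilde N$, contradicting the de Rham irreducibility of $\tilde N_i$. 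Uniqueness up to reordering is then inherited from the uniqueness of the de Rham decomposition.

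The only delicate point I expect is the bridging between the algebraic notion of ideal decomposition of $\tilde\mn$ and the geometric notion of parallel distribution decomposition of $\tilde N$; but this is precisely the content of Theorem \ref{teo.parallel} combined with Proposition \ref{pro.paralleltensors}, so the corollary should fall out as a packaging of the appendix's main results, with no new computation required.
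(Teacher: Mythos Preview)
Your argument is correct and rests on exactly the same ingredients as the paper's proof---Theorem~\ref{teo.parallel}, Proposition~\ref{pro.paralleltensors}, and the uniqueness of the de Rham decomposition---but you traverse the circle in the opposite direction. The paper obtains \emph{existence} of the ideal decomposition by a trivial algebraic induction (keep splitting reducible summands until every factor is irreducible or abelian), and then invokes Theorem~\ref{teo.parallel} and Proposition~\ref{pro.paralleltensors} only to show that an algebraically irreducible factor $(\mn_i,g_i)$ is de Rham irreducible, which yields uniqueness. You instead take the de Rham decomposition of $(\tilde N,\tilde g)$ as the starting point, use Theorem~\ref{teo.parallel} and Proposition~\ref{pro.paralleltensors} to show that the de Rham factors are ideals (existence), and then use the ``if'' direction of Proposition~\ref{pro.paralleltensors} again to show they are algebraically irreducible. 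Both routes are equally short; the paper's version has the minor advantage that existence is entirely elementary and the heavy tools are used only once, while yours makes the identification of the algebraic and de Rham decompositions more explicit from the outset.
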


\begin{proof} By definition, every reducible $2$-step nilpotent Lie algebra is the orthogonal direct sum of ideals, each of them being $2$-step nilpotent or abelian. The above decomposition thus always exists. Consider such a decomposition and denote by $N$ and $N_i$ the simply connected Lie groups with Lie algebras $\mn$ and $\mn_i$ respectively. Then $(N,g)$ is isometric to the Riemannian product 
$$(N_1,g_1)\times\ldots\times (N_k,g_k)\times \mathbb{R}^{\dim(\ma)},$$
and moreover $(N_i,g_i)$ is an irreducible Riemannian manifold for each $i\in\{1,\ldots,k\}$. Indeed, if some $(N_i,g_i)$ were reducible, then the restriction of the metric $g_i$ to the factors would define parallel symmetric tensors on $(N_i,g_i)$, which by Theorem \ref{teo.parallel} have to be left-invariant. By Proposition \ref{pro.paralleltensors} this would give a decomposition of $\mn_i$ as an orthogonal direct sum of ideals, thus contradicting its irreducibility.

Therefore, the uniqueness statement follows from the uniqueness (up to reordering) of the de Rham decomposition of Riemannian manifolds.
\end{proof}

\bigskip
\bigskip

\bibliographystyle{plain}
\bibliography{biblio}

\end{document}